\documentclass[11pt]{amsart}

\usepackage[T1]{fontenc}
\usepackage{lmodern}
\usepackage{microtype}
\usepackage[margin=1.02in]{geometry}
\usepackage{amsmath,amssymb,mathtools}
\usepackage{aliascnt}
\usepackage{bm}
\usepackage{xcolor}
\usepackage[colorlinks=true,linkcolor=blue!45!black,citecolor=blue!45!black,urlcolor=blue!45!black]{hyperref}
\usepackage[nameinlink,capitalise,noabbrev]{cleveref}

\allowdisplaybreaks
\numberwithin{equation}{section}

\newtheorem{theorem}{Theorem}[section]
\newaliascnt{proposition}{theorem}
\newtheorem{proposition}[proposition]{Proposition}
\aliascntresetthe{proposition}
\newaliascnt{lemma}{theorem}
\newtheorem{lemma}[lemma]{Lemma}
\aliascntresetthe{lemma}
\newaliascnt{corollary}{theorem}
\newtheorem{corollary}[corollary]{Corollary}
\aliascntresetthe{corollary}
\theoremstyle{remark}
\newaliascnt{remark}{theorem}
\newtheorem{remark}[remark]{Remark}
\aliascntresetthe{remark}

\newcommand{\R}{\mathbb R}

\newcommand{\Z}{\mathbb Z}
\newcommand{\E}{\mathbb E}
\newcommand{\ip}[2]{\left\langle #1,#2\right\rangle}
\newcommand{\norm}[1]{\left\lVert #1\right\rVert}
\newcommand{\abs}[1]{\left\lvert #1\right\rvert}
\newcommand{\sgn}{\operatorname{sgn}}
\newcommand{\arsinh}{\operatorname{arsinh}}

\newcommand{\Id}{\mathrm I}
\newcommand{\iu}{\mathrm i}
\newcommand{\eps}{\varepsilon}
\newcommand{\BK}{B_{\mathrm K}}
\newcommand{\KG}{K_{\mathrm G}}
\newcommand{\Ko}{\mathfrak K_{\mathrm K}}
\newcommand{\KoDim}[1]{\mathfrak K_{\mathrm K,#1}}
\newcommand{\cK}{c_{\mathrm{Kri}}}
\newcommand{\Podd}{P_{\mathrm{odd}}}
\newcommand{\xvec}{\bm x}
\newcommand{\yvec}{\bm y}
\newcommand{\uvec}{\bm u}
\newcommand{\vvec}{\bm v}
\newcommand{\zvec}{\bm z}
\newcommand{\Xvec}{\bm X}
\newcommand{\Yvec}{\bm Y}
\newcommand{\avec}{\bm a}
\newcommand{\bvec}{\bm b}
\newcommand{\muvec}{\bm\mu}
\newcommand{\tauvec}{\bm\tau}
\newcommand{\sigmavec}{\bm\sigma}
\newcommand{\Gvec}{\bm G}
\newcommand{\dd}{\mathop{}\!\mathrm d}

\title{The K\"onig constant is one}

\author[X. Xie]{Xinyuan Xie}
\address{(X.X.) Department of Mathematics, University of California, Irvine, CA 92697, USA}
\email{xinyuax7@uci.edu}

\author[H. Zhang]{Haonan Zhang}
\address{(H.Z.) Department of Mathematics, University of South Carolina, Columbia, SC 29208, USA}
\email{haonanzhangmath@gmail.com}

\date{}
\subjclass[2020]{46B85, 42B10, 60G15, 68W20}
\keywords{Grothendieck inequality, K\"onig's bilinear form, Krivine rounding, Gaussian partitions, oscillatory Gaussian integrals, cubic parity}

\hypersetup{
  pdftitle={The K\"onig Constant},
  pdfauthor={Xinyuan Xie and Haonan Zhang},
  pdfsubject={The Grothendieck inequality and the K\"onig's bilinear form},
  pdfkeywords={Grothendieck inequality, K\"onig's bilinear form, Krivine rounding, Gaussian partitions, oscillatory Gaussian integrals, cubic parity}
}

\begin{document}

\begin{abstract}
For each $N\geq1$, consider the normalized K\"onig bilinear form
$B_{\mathrm K}:L_\infty(\mathbb R^N)\times L_\infty(\mathbb R^N)\to\mathbb R$
given by
\[
B_{\mathrm K}(f,g):=\frac{1}{(\sqrt{2}\pi)^N}
\iint_{\mathbb R^N\times\mathbb R^N}
f(x)g(y)e^{-(\lVert x\rVert^2+\lVert y\rVert^2)/2}
\sin\langle x,y\rangle\,\mathrm d x\,\mathrm d y,
\]
We define the K\"onig constant by
\[
\mathfrak K_{\mathrm K}:=\sup_{N\geq1}\sup_{\substack{f,g:\mathbb R^N\to\{\pm1\}\\
f,g\ \mathrm{measurable}}}B_{\mathrm K}(f,g).
\]

The study of this bilinear form arose from efforts to determine the exact value of the Grothendieck constant.
K\"onig~\cite{KONIG}
conjectured that the sharp value should instead be given by the
one-dimensional half-spaces $B_{\mathrm K}(\operatorname{sgn}(x_1),\operatorname{sgn}(x_1))=\frac{2}{\pi}\log(1+\sqrt{2})$.
A positive answer to this conjecture, together with a classical upper bound of Krivine \cite{KRIVINE}, would determine the exact value of the Grothendieck constant. 
In a breakthrough~\cite{BMMN}, Braverman, Makarychev,
Makarychev, and Naor disproved K\"onig's conjecture already in dimension two and used their counterexamples to obtain the first
strict improvement over Krivine's bound. 
One question in \cite{BMMN} attempts to determine the Grothendieck
constant through alternating Krivine rounding schemes arising from K\"onig's bilinear form in high dimension. More recently, Li
et al.~\cite{LISK} constructed high-dimensional examples showing that $\mathfrak K_{\mathrm K}\ge 0.59357$.

An elementary Fourier argument gives $\mathfrak K_{\mathrm K}\le 1$ and excludes
equality for every finite-dimension. In this paper, we prove that $\mathfrak K_{\mathrm K}=1$ by
constructing a family of Boolean pairs in high dimensions. In particular, this gives a negative answer
to the high-dimensional aspect of the question in \cite{BMMN}.

\end{abstract}

\maketitle

\section{Introduction}

The (real) Grothendieck constant, denoted by $\KG$, is the smallest
$K>0$ such that, for every pair of positive integers $r,s$, every real
$r\times s$ matrix $(a_{ij})$, every real Hilbert space $\mathcal H$,
and every choice of unit vectors
$\xvec_1,\ldots,\xvec_r,\yvec_1,\ldots,\yvec_s\in\mathcal H$, one has
\begin{equation}\label{eq:GI}
 \left|\sum_{i=1}^r\sum_{j=1}^s a_{ij}\ip{\xvec_i}{\yvec_j}\right|\leq K\max_{\varepsilon_i,\delta_j\in\{\pm1\}}\left|\sum_{i=1}^r\sum_{j=1}^s a_{ij}\varepsilon_i\delta_j\right|.
\end{equation}
Grothendieck's theorem asserts that such a finite constant exists~\cite{GROTH}.
Equivalently, the same optimal constant is the least constant in the
continuous kernel form
of \cref{eq:GI}: for $\sigma$-finite measure spaces $(\Omega,\mu)$ and
$(\Lambda,\nu)$, every real $a\in L_1(\Omega\times\Lambda)$, and
measurable maps $u:\Omega\to\mathcal H$ and $v:\Lambda\to\mathcal H$
of norm at most one,
\begin{equation}\label{eq:GI-continuous}
 \abs{\iint a(s,t)\ip{u(s)}{v(t)}\,\dd\mu(s)\,\dd\nu(t)}
 \leq\KG\sup_{\substack{\varepsilon:\Omega\to\{\pm1\}\\
                          \delta:\Lambda\to\{\pm1\}}}
 \abs{\iint a(s,t)\varepsilon(s)\delta(t)\,\dd\mu(s)\,\dd\nu(t)}.
\end{equation}
Here the supremum is over measurable sign functions.
Grothendieck's inequality plays an essential role in many areas, such as functional analysis, Banach space theory, operator algebras,
semidefinite programming and approximation algorithms, combinatorial
optimization and computational complexity, and quantum information; see
\cite{PISIER,VERSHYNIN,ALONNAOR,KHOTNAOR,NRV,BMMN} and the references
therein. Its exact value remains unknown. Grothendieck's original
estimates~\cite{GROTH} were
\[
 \frac\pi2\leq\KG\leq\sinh\!\left(\frac\pi2\right).
\]

The two classical historical estimates are
\[
 K_{\mathrm{DR}}:=1.676956674215576\ldots\leq\KG
 \leq\cK^{-1}
 =\frac{\pi}{2\log(1+\sqrt2)}
 =1.782213978\ldots,
\]
where $K_{\mathrm{DR}}$ is the lower bound obtained independently by
Davie~\cite{DAVIE} and Reeds~\cite{REEDS}, and the upper bound is due
to Krivine~\cite{KRIVINE,KRIVINE79}. The best currently announced
bounds, due to Saha et al.~\cite{SLXCKKM}, are
\[
 \frac{6\pi}{11}=1.713595992\ldots\leq\KG
 \leq1.7818666069360661
 <\cK^{-1}-3.4737\cdot10^{-4}.
\]

Krivine's elegant rounding argument
\cite{KRIVINE,KRIVINE79} establishes the upper historical estimate.
Given unit vectors $\xvec_i,\yvec_j$, it seeks random signs whose
pairwise correlations are a fixed multiple of
$\ip{\xvec_i}{\yvec_j}$, so that averaging transfers the vector
inequality to the Boolean one. The basic partition is a Gaussian
hyperplane: if $k\geq1$, $\Gvec\sim\mathcal N(0,\Id_k)$, and
$\xvec,\yvec\in\R^k$ are unit vectors, then
\begin{equation}\label{eq:hyperplane-correlation}
 \E\bigl[\sgn\ip{\Gvec}{\xvec}\,\sgn\ip{\Gvec}{\yvec}\bigr]=\frac2\pi\arcsin\ip{\xvec}{\yvec}.
\end{equation}
Here $\sgn(0):=1$. This is the Gaussian arcsine law; it follows from
rotational invariance since the two signs disagree with probability
$\arccos\ip{\xvec}{\yvec}/\pi$.

Since this correlation is nonlinear, Krivine first preprocesses the
inner products. His tensor-power lift gives, after harmless padding,
unit vectors $\Phi_\gamma(\xvec)$ and $\Psi_\gamma(\yvec)$ satisfying
\[
 \ip{\Phi_\gamma(\xvec)}{\Psi_\gamma(\yvec)}
 =\sin\bigl(\gamma\ip{\xvec}{\yvec}\bigr).
\]
The absolute coefficient sum of this sine series is
$\sinh\gamma$, so the lift is available when $\sinh\gamma\leq1$.
Applying the hyperplane rule after the lift then yields the linear
correlation $(2\gamma/\pi)\ip{\xvec}{\yvec}$. The largest permitted
value is $\gamma=\arsinh(1)$, and hence
\begin{equation}\label{eq:cK}
 \cK=\frac2\pi\arsinh(1)=\frac2\pi\log(1+\sqrt2),
 \qquad
 \frac1{\cK}=\frac{\pi}{2\log(1+\sqrt2)}.
\end{equation}
This recovers Krivine's classical bound
\begin{equation}\label{eq:Krivine-bound}
 \KG\leq\frac1{\cK}.
\end{equation}

To determine the exact value of $\KG$, K\"onig~\cite{KONIG} considered
the following analytic extremal problem. For measurable Boolean functions
$f,g:\R^N\to\{\pm1\}$, define the normalized oscillatory Gaussian
functional
\begin{equation}\label{eq:BK-def}
 \BK(f,g):=\frac1{(\sqrt{2}\pi)^N}\iint_{\R^N\times\R^N}f(\xvec)g(\yvec)e^{-(\norm{\xvec}^2+\norm{\yvec}^2)/2}\sin\ip{\xvec}{\yvec}\,\dd\xvec\,\dd\yvec,
\end{equation}
for each $N\geq1$, define its fixed-dimensional K\"onig constant by
\begin{equation}\label{eq:KoN-def}
 \KoDim{N}:=\sup_{\substack{f,g:\R^N\to\{\pm1\}\\
                    f,g\ \mathrm{measurable}}}\BK(f,g),
\end{equation}
and define the K\"onig constant by
\begin{equation}\label{eq:Ko-def}
 \Ko:=\sup_{N\geq1}\KoDim{N}.
\end{equation}
Replacing $g$ by $-g$ shows that the same constants are obtained if
$\abs{\BK(f,g)}$ is used in the suprema. If $h_N(\xvec):=\sgn(x_1)$,
then the Gaussian half-space
identity gives
\begin{equation}\label{eq:halfspace-value-intro}
 \BK(h_N,h_N)=\cK
\end{equation}
for every $N$.

The sharp one-dimensional theorem is
\[
 \KoDim{1}=\cK.
\]
K\"onig conjectured that the half-space is extremal in every dimension,
namely,
\[
 \KoDim{N}=\cK \qquad (N\geq1).
\]
The direct connection with the continuous form
\cref{eq:GI-continuous} explains why this conjecture bears on $\KG$.
Put
\[
 K_N(\xvec,\yvec):=
 e^{-(\norm{\xvec}^2+\norm{\yvec}^2)/2}\sin\ip{\xvec}{\yvec}.
\]
Apply \cref{eq:GI-continuous} with $a=K_N$. Its sign supremum is
$(\sqrt{2}\pi)^N\KoDim{N}$, while the Hilbert-valued test
$u_N(\xvec)=v_N(\xvec)=\xvec/\norm{\xvec}$ (away from the origin) has
value
\[
 \iint K_N(\xvec,\yvec)\ip{u_N(\xvec)}{v_N(\yvec)}\,\dd\xvec\,\dd\yvec
 = (\sqrt{2}\pi)^N\left(1-\frac1N+O\!\left(N^{-2}\right)\right).
\]
Consequently,
\[
 \KG\KoDim{N}\geq1-\frac1N+O\!\left(N^{-2}\right).
\]
K\"onig's conjecture would therefore give $\KG\geq1/\cK$ as
$N\to\infty$, and Krivine's rounding bound gives the reverse inequality.
Thus the conjecture would make Krivine's constant exact
\cite[Proposition~1.2 and equation~(3.4)]{BMMN}. K\"onig's formulation
was motivated by unpublished computations of Haagerup reported in
\cite[Section~1.1]{BMMN}; compare also Haagerup's classical complex
upper-bound method~\cite{HAAGERUP}.

K\"onig~\cite{KONIG} announced the one-dimensional statement as joint unpublished
work with Tomczak--Jaegermann. It was subsequently proved, together
with its equality cases, by Braverman, Makarychev,
Makarychev, and Naor (BMMN)~\cite[Theorem~1.4]{BMMN};
Appendix~\ref{app:one-dimensional-whole-space} gives a new direct proof.
A breakthrough of BMMN was the surprising planar counterexample showing 
$\KoDim{2}>\cK$~\cite{BMMN}. Their perturbative rounding argument
turned this counterexample into the first strict improvement of
Krivine's bound. They also asked how the maximizers and the associated
schemes behave in higher dimension~\cite[Question~3.2]{BMMN}.

Later work has pursued several distinct directions. Naor and
Regev~\cite{NR} showed that finite-dimensional oblivious Krivine schemes
approximate $\KG$ arbitrarily well, while Krivine~\cite{KRIVINENOTE}
gave a three-dimensional rotating-half-space construction. Quantitative
upper-bound improvements refine BMMN's perturbative Krivine-rounding
approach, while recent lower-bound improvements perturb the Davie--Reeds
construction in cubic Gaussian/Hermite directions; see
\cite{HEILMAN,LISK} and \cite{HEILMANLOWER,JM}, respectively. In particular, Li et al \cite{LISK} constructed high-dimensional affine-radial
partitions showing that the K\"onig constant is greater
than $0.59357$.

\subsection{Main results}\label{subsec:main-results}

Our main result determines the extremal value of the
K\"onig problem, after allowing the dimension to vary. It shows that
increasingly complicated higher-dimensional partitions do not merely
improve on the half-space value; they approach the elementary Fourier
upper bound.

\begin{theorem}\label{thm:main}
There exists an explicit family of measurable functions, indexed by
$m\in\Z_{\geq3}$ and $d\in\Z_{\geq1}$,
\[
 f_{d,m},g_{d,m}:\R^{2m+3d\binom m3}\longrightarrow\{\pm1\},
\]
each of which is odd almost everywhere, such that
\begin{equation}\label{eq:main-limit}
 \lim_{m\to\infty}\lim_{d\to\infty}\BK(f_{d,m},g_{d,m})=1.
\end{equation}
Consequently, $\Ko=1$. Moreover, for every fixed $N$ there exists
$\eta_N>0$ such that
\[
 \abs{\BK(f,g)}\leq1-\eta_N
\]
for all measurable Boolean $f,g:\R^N\to\{\pm1\}$. Thus the supremum is
not attained in finite dimension, and every extremizing sequence has
unbounded dimension.
\end{theorem}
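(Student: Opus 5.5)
The bound $\Ko\le1$ and the strict bound in fixed dimension come from a Plancherel/Cauchy--Schwarz argument; the equality $\Ko=1$ needs an explicit high-dimensional construction, which is the real work.

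\textbf{Upper bound.} Put $F(\xvec)=f(\xvec)e^{-\norm{\xvec}^2/2}$ and $G(\yvec)=g(\yvec)e^{-\norm{\yvec}^2/2}$. Since $\sin\ip{\xvec}{\yvec}=\operatorname{Im}e^{\iu\ip{\xvec}{\yvec}}$, we get
\[
(\sqrt2\pi)^N\BK(f,g)=\operatorname{Im}\int G(\yvec)\,\widehat F(-\yvec)\,\dd\yvec ,
\qquad \widehat F(\yvec):=\int F(\xvec)e^{-\iu\ip{\xvec}{\yvec}}\dd\xvec .
\]
Cauchy--Schwarz and Plancherel give
\[
(\sqrt2\pi)^N\abs{\BK(f,g)}\le\norm{G}_2\,\norm{\widehat F}_2=(2\pi)^{N/2}\norm{F}_2\norm{G}_2 .
\]
For $f,g$ with values in $[-1,1]$ we have $\norm{F}_2^2,\norm{G}_2^2\le\pi^{N/2}$, so the right side is at most $(\sqrt2\pi)^N$, i.e. $\abs{\BK}\le1$.

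\textbf{No near-extremizers in fixed dimension.} Fix $N$ and relax to functions with values in $[-1,1]$. The kernel $K_N$ is smooth and has Gaussian decay. Hence $f\mapsto\int K_N(\xvec,\cdot)f(\xvec)\,\dd\xvec$ maps the weak-$*$ compact unit ball of $L_\infty$ compactly into $L_1$. Therefore $\BK$ is jointly weak-$*$ continuous there, and the relaxed supremum is attained.

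Suppose the relaxed supremum equals $1$. Every inequality above is then an equality, which forces three things:
\begin{itemize}
\item $\abs{f}=\abs{g}=1$ a.e.;
\item $\widehat F(-\yvec)$ is purely imaginary wherever it matters;
\item $G=c\,\operatorname{Im}\widehat F(-\cdot)$ for some $c>0$.
\end{itemize}
Now $\widehat F$ is the Fourier transform of a function with Gaussian decay, so it is real-analytic (indeed entire). Thus $\abs{\operatorname{Im}\widehat F(\yvec)}=c^{-1}e^{-\norm{\yvec}^2/2}$ with $\operatorname{Im}\widehat F$ analytic and nonvanishing. Hence $\operatorname{Im}\widehat F$ equals $\pm c^{-1}e^{-\norm{\yvec}^2/2}$ globally with a fixed sign, so it is even. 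But $\operatorname{Im}\widehat F$ is odd in $\yvec$, a contradiction. So the attained maximum in dimension $N$ is some $1-\eta_N<1$, which gives the second assertion of the theorem.

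\textbf{Lower bound construction.} Given $f$, the optimal partner is $g=\sgn\bigl(\operatorname{Im}\widehat F(-\cdot)\bigr)$. With this choice the Cauchy--Schwarz loss is exactly the gap between $\norm{\operatorname{Im}\widehat F}_{L_1}$ and what it would be if $\abs{\operatorname{Im}\widehat F}$ were a constant multiple of $e^{-\norm{\yvec}^2/2}$. So the goal is to build odd Boolean $f$ whose transform $e^{\norm{\yvec}^2/2}\operatorname{Im}\widehat F(\yvec)$ has nearly constant modulus under the Gaussian measure, while $\operatorname{Re}\widehat F$ is negligible.

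The dimension count $2m+3d\binom m3$ suggests the construction:
\begin{itemize}
\item $m$ planar variables $z_1,\dots,z_m\in\R^2$;
\item for each triple $\{a,b,c\}$, $d$ auxiliary $3$-dimensional blocks;
\item $f$ is the sign of a sum over triples of cubic parity terms, coupling $\operatorname{sgn}$-type phases of $z_a,z_b,z_c$ to the auxiliary blocks.
\end{itemize}
As $d\to\infty$ the auxiliary blocks average out via a central-limit/law-of-large-numbers computation. This should turn $\widehat F$ into an explicit function of the planar variables whose phase is governed by a cubic parity $\prod\sgn(\cdot)$ over triples. Then, as $m\to\infty$, the $\binom m3$ nearly independent parities should make $\abs{\operatorname{Im}\widehat F}\,e^{\norm{\yvec}^2/2}$ concentrate around a constant, so that $\norm{\cdot}_1/\norm{\cdot}_2\to1$ in the Gaussian-weighted sense. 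This gives \cref{eq:main-limit}, and together with the upper bound, $\Ko=1$.

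\textbf{Main obstacle.} The hard step is the double-limit computation: making the $d\to\infty$ Gaussian-averaging limit explicit and then proving the $m\to\infty$ concentration of the modulus. I would first compute $\widehat F$ for one triple block exactly using Hermite/cubic-parity expansions. I would then control the cross-triple correlations by showing they are $O(m^{-1})$ through pairwise near-independence of the cubic parities.
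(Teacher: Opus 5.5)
Your upper bound is correct, and so is your fixed-dimensional gap. Weak-$*$ attainment of the relaxed supremum, plus the observation that $\operatorname{Im}\widehat F$ is continuous and odd (hence vanishes at the origin) while equality would force $\abs{\operatorname{Im}\widehat F(\yvec)}=c^{-1}e^{-\norm{\yvec}^2/2}$, is exactly the qualitative compactness route the paper mentions. Analyticity is unnecessary; continuity at $0$ suffices. The paper instead bounds the sine transform by $L_N\norm{\yvec}$ on a small ball. This forces an explicit $L_2$ defect and hence an explicit $\eta_N$ without compactness.

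The genuine gap is the lower bound $\lim_{m}\lim_{d}\BK(f_{d,m},g_{d,m})=1$, which is the substance of the theorem. You never define $f_{d,m},g_{d,m}$, and the mechanism you sketch would not work as described. If $f$ is the sign of a sum of cubic-parity terms over all triples, its transform does not factor over triples, because the sign is applied to the whole sum. So there is no exact one-triple computation to start from, and ``the parities concentrate the modulus'' only restates the Cauchy--Schwarz equality condition.

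The paper's construction reverses the roles of the variables:
\begin{enumerate}
\item The sign is applied only to a linear form in the planar variables, $f=\sgn\ip{\Xvec}{\avec(\uvec)}$ and $g=\sgn\ip{\Yvec}{\bvec(\vvec)}$. The $m$ planar components of the unit normals are rotated by quadratic phases $\theta_s$ of the auxiliary blocks.
\item The oscillatory half-space identity (the arcsine law continued to correlation $\iu$) integrates out $\Xvec,\Yvec$ exactly. This leaves $\frac{2}{\pi}\arsinh Z$ with $Z=\frac1m\sum_s\cos(\theta_s(\uvec)+\theta_s(\vvec))$.
\item The $\arsinh$ series has coefficients $(-1)^nc_n$ with $\frac{2}{\pi}\sum_n c_n=1$. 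This gives the precise target: each degree-$(2n+1)$ phase integral $T_{d,m,n}$ must tend to $(-1)^n$.
\item Expanding the cosines turns the integrand into $\exp(\iu\sum_s\mu_s\eta_s)$. This is Gaussian and factors exactly over $3\times3$ blocks, each equal to $\det(\Id-\iu\eps A-\frac{\eps^2}{2}A^2)^{-1/2}=1+\frac{\iu}{2}abc\,\eps^3+O(\eps^4)$.
\item The $d$ copies do not average out; they multiply. With $\eps_d^3=2\pi/d$ the cubic phase accumulates to $(-1)^{abc}$, while the errors total $O(d^{-1/3})$.
\item As $m\to\infty$, the $2n+1$ selected channels are distinct with probability $1-O(n^2/m)$. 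Then $e_3(\muvec)\equiv\binom{2n+1}{3}\equiv n\pmod 2$, and this parity identity cancels the alternating signs.
\end{enumerate}
None of these steps appears in your plan: not the linear-sign/half-space reduction, the $\arsinh$ sign target, the exact determinant factorization, or the collision-free parity count. Without them you have neither a computable $d\to\infty$ limit nor a reason for it to equal $1$.
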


Although we do not determine the exact values of the finite-dimensional
K\"onig constants $\mathfrak K_{\mathrm K,N}$ or characterize their extremizers,
our main theorem shows that the oscillatory Gaussian kernel of the K\"onig
functional cannot determine $K_{\mathrm G}$ through the high-dimensional
testing procedure envisaged by BMMN: both its normalized Boolean norm and
the natural Hilbert-valued test tend to $1$. Thus, if there were infinitely
many dimensions $N$ for which maximizing pairs
$(f_{\max}^{(N)},g_{\max}^{(N)})$ formed alternating Krivine rounding
schemes, this would yield the false upper bound $\KG\leq1$. In this sense,
our result gives a negative answer to the high-dimensional form of the
second part of Question~3.2 in \cite{BMMN}.

\begin{remark}[Monotonicity in the dimension]
The sequence $(\KoDim{N})_{N\geq1}$ is nondecreasing. Indeed, padding
$f$ and $g$ by an unused coordinate leaves $\BK$ unchanged: in
$\sin(\ip{\xvec}{\yvec}+uv)$, the term containing $\sin(uv)$ integrates
to zero, while the $\cos(uv)$ term contributes exactly the additional
normalizing factor $\sqrt2\pi$. Together with the one-dimensional
theorem, BMMN's planar counterexample, and \cref{thm:main}, this gives
\[
 \KoDim{1}=\cK<\KoDim{2}\leq\KoDim{3}\leq\cdots<1,
 \qquad
 \lim_{N\to\infty}\KoDim{N}=1.
\]
\end{remark}

The strict fixed-dimensional inequality in this hierarchy is an elementary
complement rather than the main new ingredient of the theorem.  BMMN
observed in dimension two that the
maximum of K\"onig's bilinear form is attained by a weak-compactness argument
\cite[(3.1) and the paragraph following it]{BMMN}; the same argument works in
every fixed dimension.  Combining attainment with the standard Fourier upper
bound and the obstruction to equality gives the qualitative gap below.  We
have not found this strict fixed-dimensional gap stated explicitly in the
literature, so Appendix~\ref{app:Fourier-gap} records a direct quantitative
proof that also supplies an explicit positive $\eta_N$.

The elementary upper bound has a simple Fourier explanation. After the
Gaussian weights are absorbed into $f$ and $g$, the K\"onig's bilinear form
becomes an $L_2$ pairing through the normalized sine transform.
Cauchy--Schwarz therefore gives $|\BK(f,g)|\leq1$. Equality would force a
sine transform, which is continuous and vanishes at the origin, to agree
up to sign with a Gaussian-weighted Boolean function, whose modulus is
nonzero there. In a fixed dimension this mismatch is uniform on a small
ball around the origin, giving the gap $\eta_N>0$. Thus approaching the
upper bound necessarily requires dimensions tending to infinity. The
short quantitative argument is recorded in
Appendix~\ref{app:Fourier-gap}.

Appendix~\ref{app:two-dimensional-quadrant} also records a sharp scalar
quadrant estimate, obtained by combining the half-line sine estimate from
Appendix~\ref{app:one-dimensional-whole-space} with a cosine factor.

\subsection{Proof sketch}\label{subsec:proof-sketch}

The construction and asymptotic analysis are organized in three steps,
carried out in \cref{sec:halfspace,sec:determinant,sec:parity}. The
proof of \cref{thm:main} is completed in
\cref{sec:main-results-proof}.

\medskip
\noindent\textbf{Step 1: define the candidates and integrate the half-space
layer.}
The construction has two kinds of variables. The vectors
$\Xvec,\Yvec\in\R^{2m}$ are the \emph{half-space variables}; after the
phase variables are fixed, the candidates are ordinary Gaussian
half-spaces. The vectors $\uvec,\vvec\in\R^{3d\binom m3}$ are the
\emph{phase variables}; quadratic functions of them determine unit
normals $\avec(\uvec),\bvec(\vvec)\in\R^{2m}$. Writing $\theta_s$ for
the quadratic phase in channel $s$ (defined explicitly in
\eqref{eq:theta}), these normals are
\begin{align}
 \avec(\uvec)&:=\frac1{\sqrt m}
 \bigl(\cos\theta_1(\uvec),\sin\theta_1(\uvec),\ldots,
       \cos\theta_m(\uvec),\sin\theta_m(\uvec)\bigr),
 \label{eq:intro-a}\\
 \bvec(\vvec)&:=\frac1{\sqrt m}
 \bigl(\cos\theta_1(\vvec),-\sin\theta_1(\vvec),\ldots,
       \cos\theta_m(\vvec),-\sin\theta_m(\vvec)\bigr).
 \label{eq:intro-b}
\end{align}
Thus
\[
 \ip{\avec(\uvec)}{\bvec(\vvec)}
 =\frac1m\sum_{s=1}^m
 \cos\bigl(\theta_s(\uvec)+\theta_s(\vvec)\bigr).
\]
We set
\begin{equation}\label{eq:intro-functions}
 f_{d,m}(\Xvec,\uvec)=\sgn\ip{\Xvec}{\avec(\uvec)},\qquad g_{d,m}(\Yvec,\vvec)=\sgn\ip{\Yvec}{\bvec(\vvec)}.
\end{equation}
The key principle is the oscillatory half-space identity: for unit
vectors $\avec,\bvec\in\R^k$,
\begin{equation}\label{eq:intro-halfspace}
 \frac1{(\sqrt{2}\pi)^k}\iint \sgn\ip{\Xvec}{\avec}\,\sgn\ip{\Yvec}{\bvec}e^{-(\norm{\Xvec}^2+\norm{\Yvec}^2)/2}\sin\ip{\Xvec}{\Yvec}\,\dd\Xvec\,\dd\Yvec=\frac2\pi\arsinh\ip{\avec}{\bvec}.
\end{equation}
It is the analytic continuation to the imaginary correlation
$\iu:=\sqrt{-1}$ of the usual
one-dimensional Gaussian arcsine law \eqref{eq:hyperplane-correlation}.
Integrating first in $\Xvec,\Yvec$ therefore removes all Boolean
discontinuities and leaves a smooth integral in $\uvec,\vvec$. The
normal inner product is an average of $m$ cosines. Expanding those
cosines and the absolutely convergent series for $\arsinh$ gives
\begin{equation}\label{eq:intro-coeff}
 \BK(f_{d,m},g_{d,m})=\frac2\pi\sum_{n\geq0}(-1)^nc_nT_{d,m,n},\qquad c_n>0,\qquad \frac2\pi\sum_{n\geq0}c_n=1.
\end{equation}
More explicitly, write $[m]:=\{1,\ldots,m\}$. Let
$\tau_1,\ldots,\tau_{2n+1}$ be independent and uniform in $[m]$, and
let $\sigma_1,\ldots,\sigma_{2n+1}$ be independent
uniform signs, independent also of the $\tau_t$'s. Write
\[
 \tauvec:=(\tau_1,\ldots,\tau_{2n+1}),\qquad
 \sigmavec:=(\sigma_1,\ldots,\sigma_{2n+1}),
\]
and define the random frequency vector
\[
 \muvec=(\mu_1,\ldots,\mu_m),
 \qquad
 \mu_s:=\sum_{t=1}^{2n+1}\sigma_t\mathbf 1_{\{\tau_t=s\}},\qquad
 \eta_s(\uvec,\vvec):=\theta_s(\uvec)+\theta_s(\vvec).
\]
With $N_2=3d\binom m3$, the coefficient in
\eqref{eq:intro-coeff} is
\begin{equation}\label{eq:intro-T-explicit}
 \begin{split}
 T_{d,m,n}:={}&\E_{\tauvec,\sigmavec}\Bigg[
 \frac1{(\sqrt{2}\pi)^{N_2}}
 \iint_{\R^{N_2}\times\R^{N_2}}
 \exp\!\left(-\frac{\norm{\uvec}^2+\norm{\vvec}^2}{2}
 +\iu\ip{\uvec}{\vvec}
 +\iu\sum_{s=1}^m\mu_s\eta_s(\uvec,\vvec)\right)
 \,\dd\uvec\,\dd\vvec\Bigg].
 \end{split}
\end{equation}
Thus the problem is reduced to understanding one phase integral
$T_{d,m,n}$ at each odd degree $2n+1$.

The exponent in \eqref{eq:intro-T-explicit} is quadratic in
$(\uvec,\vvec)$, so the remaining integral is Gaussian. In general, if
$M$ is a complex symmetric $D\times D$ matrix with positive-definite
Hermitian real part, then the branch-free squared Gaussian identity is
\begin{equation}\label{eq:intro-Gaussian-formula}
 \left(\frac1{(2\pi)^{D/2}}\int_{\R^D}
 e^{-\zvec^{\mathsf T}M\zvec/2}\,\dd\zvec\right)^2
 =\det(M)^{-1}.
\end{equation}
After decomposing the phase coordinates into independent
three-dimensional blocks, \eqref{eq:intro-Gaussian-formula} determines
the square of each block integral by a determinant. This is the reason
for the determinant computation in the next step.

\medskip
\noindent\textbf{Step 2: evaluate the phase integral by Gaussian
determinants.}
The phase coordinates are grouped into independent three-dimensional
blocks, indexed by $1\le j<k<\ell\le m$ and $1\le r\le d$. The vector
$\muvec=(\mu_1,\ldots,\mu_m)$ in \eqref{eq:intro-T-explicit} is the net
frequency vector produced by the cosine expansion: $\mu_s$ is the signed
number of times channel $s$ was selected. Consequently, a block indexed
by $(j,k,\ell)$ sees the three frequencies
$(\mu_j,\mu_k,\mu_\ell)$. More generally, a block with frequencies
$(a,b,c)$ is governed by the three-cycle matrix
\begin{equation}\label{eq:intro-A}
 A(a,b,c)=\begin{pmatrix}0&c&b\\ c&0&a\\ b&a&0\end{pmatrix}.
\end{equation}
Denote the normalized Gaussian block integral by
$\phi_\eps(a,b,c)$. The squared Gaussian formula gives
\begin{equation}\label{eq:intro-phi}
 \phi_\eps(a,b,c)^2
 =\det\!\left(\Id-\iu\eps A(a,b,c)-\frac{\eps^2}{2}A(a,b,c)^2\right)^{-1}.
\end{equation}
Thus the determinant computation does not require choosing a branch of
the complex square root. When $\phi_\eps$ itself is recovered near
$\eps=0$, its integral definition and the normalization $\phi_0=1$
select the unique local branch.
Applying the Gaussian formula block by block to the expression for
$T_{d,m,n}$ in \eqref{eq:intro-T-explicit} gives the block
representation
\begin{equation}\label{eq:intro-T-product}
 T_{d,m,n}
 =\E_{\tauvec,\sigmavec}
 \prod_{1\leq j<k<\ell\leq m}
 \phi_{\eps_d}(\mu_j,\mu_k,\mu_\ell)^d.
\end{equation}
Thus the analysis of $T_{d,m,n}$ reduces exactly to understanding one
$3\times3$ determinant for each channel triple.
The determinant in \eqref{eq:intro-phi} equals
$1-\iu\eps^3abc+O(\eps^4)$. Since the block integral is continuous and
equals $1$ at $\eps=0$, the squared identity gives
$\phi_\eps(a,b,c)=1+\iu\eps^3abc/2+O(\eps^4)$. With
$\eps_d=(2\pi/d)^{1/3}$, the $d$-fold product therefore satisfies
\begin{equation}\label{eq:intro-one-bit}
 \phi_{\eps_d}(\mu_j,\mu_k,\mu_\ell)^d
 \xrightarrow[d\to\infty]{}(-1)^{\mu_j\mu_k\mu_\ell}.
\end{equation}
The full phase integral factors over all triples and converges to the
cubic character
\begin{equation}\label{eq:intro-cubic-character}
 (-1)^{e_3(\muvec)},\qquad e_3(\muvec):=\sum_{j<k<\ell}\mu_j\mu_k\mu_\ell.
\end{equation}

\medskip
\noindent\textbf{Step 3: identify cubic parity with degree parity.}
At degree $2n+1$ in \eqref{eq:intro-coeff}, the cosine expansion chooses
$2n+1$ channel indices. For fixed $n$, these indices are distinct with
probability tending to one as $m\to\infty$. On this collision-free
event the frequency vector has exactly $2n+1$ nonzero coordinates, all
in $\{\pm1\}$. Hence
\begin{equation}\label{eq:intro-parity}
 e_3(\muvec)\equiv\binom{2n+1}{3}\equiv n\pmod2.
\end{equation}
Consequently,
\begin{equation}\label{eq:intro-T-limit}
 \lim_{m\to\infty}\lim_{d\to\infty}T_{d,m,n}=(-1)^n.
\end{equation}
This sign cancels the alternating sign of the $\arsinh$ coefficient in
every fixed odd degree. Absolute summability permits dominated
convergence in \eqref{eq:intro-coeff}, and the remaining positive
coefficients sum to $1$.

\subsection{Application to improving the Grothendieck constant}
\label{subsec:grothendieck-application}

The same construction has a qualitative consequence for the
Grothendieck constant.

\begin{corollary}\label{cor:groth}
There is a randomized Krivine scheme such that
\[
 \KG<\frac\pi{2\log(1+\sqrt2)}.
\]
\end{corollary}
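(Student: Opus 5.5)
The plan is to follow the BMMN perturbative Krivine-rounding route, fed by the finite-dimensional pairs of \cref{thm:main}.

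\textbf{Step 1: reduce to an alternating rounding scheme.} BMMN \cite[Section~3]{BMMN} show that it suffices to exhibit $N$ and odd measurable $f,g:\R^N\to\{\pm1\}$ such that
\begin{itemize}
\item[(i)] $\BK(f,g)>\cK$, and
\item[(ii)] the pair can be inserted into their Krivine-type scheme with a preprocessing lift whose correlation map stays within the admissible range.
\end{itemize}
Concretely, the scheme takes unit vectors $\xvec_i,\yvec_j$ and applies Krivine's tensor lift with a parameter slightly adjusted from $\arsinh(1)$. It then projects the lifted vectors by an $N\times\infty$ Gaussian matrix and rounds with $f,g$. Oddness of $f,g$ makes the resulting correlation an odd function of $\ip{\xvec}{\yvec}$. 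Its linear coefficient is governed by $\BK(f,g)$ through the Fourier/Hermite expansion, which is how \cref{eq:halfspace-value-intro} recovers $\cK$ for half-spaces. Their argument then gives $\KG\le 1/(\cK+\delta)$ for some $\delta>0$ depending on the gain $\BK(f,g)-\cK$.

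\textbf{Step 2: choose the pair.} By \cref{eq:main-limit} we can fix $m$ and then $d$ so that
\[
\BK(f_{d,m},g_{d,m})>\cK+\tfrac12(1-\cK)>\cK.
\]
Property (i) then holds with a macroscopic margin, and each function is odd almost everywhere. Redefining on a null set makes them exactly odd without changing $\BK$. The mechanism that turns a gain in $\BK$ into a gain in the rounding constant is perturbative. One interpolates between the half-space scheme and the new pair, for instance by randomizing: with probability $1-p$ use $\sgn(x_1)$, and with probability $p$ use $(f_{d,m},g_{d,m})$. This is where the word ``randomized'' in the statement enters.

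\textbf{Step 3: show that a small mixture strictly improves Krivine.} For small $p$, the first-order change in the linear correlation coefficient is proportional to $p(\BK(f,g)-\cK)>0$. The higher-order odd Taylor coefficients of the correlation function change by only $O(p)$. We then rerun Krivine's inversion with these perturbed coefficients. The admissible lift parameter $\gamma$ is determined by requiring that the absolute coefficient sum of the inverse series be at most one. Since that sum depends continuously on $p$, the optimal constant moves by $p(\BK-\cK)\cdot c+O(p^2)$ for an explicit $c>0$, as in BMMN's derivative computation. Choosing $p$ small then gives the strict inequality.

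\textbf{Main obstacle.} The hard part is Step 3, namely verifying that the mixed correlation function admits a valid inversion with a strictly larger linear factor. This requires controlling the higher Hermite coefficients of the rounding correlation produced by $(f_{d,m},g_{d,m})$, which live in large dimension $2m+3d\binom m3$. Growth in dimension does not break this, because all coefficients are bounded by $1$ in absolute value, so BMMN's perturbation bounds still apply. I would first try to invoke BMMN's general perturbative lemma as a black box, which needs only (i) and oddness. If the lemma requires a specific structural hypothesis, such as dimension two, I would reprove its first-order estimate directly. That proof uses only boundedness of the Taylor coefficients and analyticity of the correlation in a disc slightly larger than the one needed for $\arsinh(1)$.
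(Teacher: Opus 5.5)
Your proposal follows the paper's argument: take a finite pair $(f_{d,m},g_{d,m})$ from \cref{thm:main}, odd almost everywhere and with $\BK(f_{d,m},g_{d,m})>\cK$, and feed it into BMMN's perturbative mixing argument \cite[Section~5 and Remark~5.6]{BMMN}, which the paper invokes in the arbitrary-dimensional form \cite[Theorem~6.2]{LISK}, so your worry about a dimension-two hypothesis does not arise. Treat your Steps~1 and~3 as heuristics only: the preprocessing for the mixed scheme must realize the inverse series of the mixed rounding correlation (not Krivine's $\sin(\gamma\,\cdot)$ lift with an adjusted $\gamma$); $\BK$ enters as that correlation's value at the imaginary point (up to rescaling the inputs), not as its linear coefficient; and the first-order gain $p(\BK-\cK)$ needs control of sign changes in the tail of the inverse series, which continuity in $p$ does not give and which is exactly what the cited result supplies.
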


\begin{remark}
By \cref{thm:main}, some finite pair $(f_{d,m},g_{d,m})$ satisfies
$\BK(f_{d,m},g_{d,m})>\cK$. The perturbative mixing argument of
BMMN~\cite[Section~5 and Remark~5.6]{BMMN}, in the
arbitrary-dimensional form of \cite[Theorem~6.2]{LISK}, yields the
corollary.
\end{remark}

The ambient dimension is denoted by $N$; $m$ is the number of phase
channels, $d$ is the number of copies of each three-cycle block, and $n$
indexes the odd power-series degree $2n+1$.

\subsection*{Comments on use of AI tools}
The main results of this paper were discovered through dialogues between the authors and ChatGPT 5.5 Pro. The authors also acknowlege the use of ChatGPT 5.6 Pro.

\subsection*{Acknowledgments}
X. X. is grateful to Paata Ivanisvili for suggesting related problems and for helpful discussions, and to Roman Vershynin for an enlightening lecture on the Grothendieck inequality. H.Z. is supported by NSF DMS-2453408. 

\section{The candidates and the half-space reduction}
\label{sec:halfspace}

We begin with the two-layer construction announced above. Fix
integers $m\geq3$ and $d\geq1$, and set
\begin{equation}\label{eq:dimensions}
 N_1:=2m,
 \qquad
 N_2:=3d\binom m3,
 \qquad
 N:=N_1+N_2,
 \qquad
 \eps_d:=\left(\frac{2\pi}{d}\right)^{1/3}.
\end{equation}
We write
\[
 (\Xvec,\uvec)\in\R^{N_1}\times\R^{N_2},
 \qquad
 (\Yvec,\vvec)\in\R^{N_1}\times\R^{N_2}.
\]
The variables $\Xvec,\Yvec$ will be integrated first
to provide a half-space envelope. The variables $\uvec,\vvec$ carry the
quadratic phases that encode the desired Fourier signs.

Regard the phase variables as column block vectors indexed by
$1\leq j<k<\ell\leq m$ and $1\leq r\leq d$:
\[
 \begin{aligned}
 \uvec&=\bigl(\uvec^{(r)}_{jk\ell}\bigr)_{j<k<\ell,\,r},
 &\uvec^{(r)}_{jk\ell}
 &=\bigl(\alpha^{(r)}_{jk\ell},\beta^{(r)}_{jk\ell},
          \gamma^{(r)}_{jk\ell}\bigr)^{\mathsf T},\\
 \vvec&=\bigl(\vvec^{(r)}_{jk\ell}\bigr)_{j<k<\ell,\,r},
 &\vvec^{(r)}_{jk\ell}
 &=\bigl(\widetilde\alpha^{(r)}_{jk\ell},
          \widetilde\beta^{(r)}_{jk\ell},
          \widetilde\gamma^{(r)}_{jk\ell}\bigr)^{\mathsf T}.
 \end{aligned}
\]
Thus $\uvec,\vvec\in(\R^3)^{d\binom m3}\cong\R^{N_2}$.
For $s\in[m]$, define the quadratic phase
\begin{equation}\label{eq:theta}
 \theta_s(\uvec):=\eps_d\sum_{r=1}^d\left(
 \sum_{s<k<\ell}\beta^{(r)}_{sk\ell}\gamma^{(r)}_{sk\ell}
 +\sum_{j<s<\ell}\alpha^{(r)}_{js\ell}\gamma^{(r)}_{js\ell}
 +\sum_{j<k<s}\alpha^{(r)}_{jks}\beta^{(r)}_{jks}\right),
\end{equation}
and define $\theta_s(\vvec)$ by the same formula with all phase
coordinates replaced by their tilded counterparts. Here $s$ is fixed:
the first sum is over $k,\ell$ subject to $s<k<\ell$, the second is over
$j,\ell$ subject to $j<s<\ell$, and the third is over $j,k$ subject to
$j<k<s$. Thus \eqref{eq:theta} puts the product of the
other two coordinates into each phase: $\beta\gamma$ in $\theta_j$,
$\alpha\gamma$ in $\theta_k$, and $\alpha\beta$ in $\theta_\ell$.
After frequency weighting, this gives
$\eps_d(\mu_j\beta\gamma+\mu_k\alpha\gamma+\mu_\ell\alpha\beta)$, the
quadratic form generated by $A(\mu_j,\mu_k,\mu_\ell)$ up to a scalar.

Define two phase-dependent unit vectors in $\R^{2m}$ by
\begin{align}
 \avec(\uvec)&:=\frac1{\sqrt m}
 \bigl(\cos\theta_1(\uvec),\sin\theta_1(\uvec),\ldots,
       \cos\theta_m(\uvec),\sin\theta_m(\uvec)\bigr),
 \label{eq:a}\\
 \bvec(\vvec)&:=\frac1{\sqrt m}
 \bigl(\cos\theta_1(\vvec),-\sin\theta_1(\vvec),\ldots,
       \cos\theta_m(\vvec),-\sin\theta_m(\vvec)\bigr).
 \label{eq:b}
\end{align}
The signs in \eqref{eq:b} are chosen so that
\begin{equation}\label{eq:inner-ab}
 \ip{\avec(\uvec)}{\bvec(\vvec)}
 =\frac1m\sum_{s=1}^m
 \cos\bigl(\theta_s(\uvec)+\theta_s(\vvec)\bigr).
\end{equation}
Finally, set
\begin{equation}\label{eq:functions}
 f_{d,m}(\Xvec,\uvec):=\sgn\ip{\Xvec}{\avec(\uvec)},
 \qquad
 g_{d,m}(\Yvec,\vvec):=\sgn\ip{\Yvec}{\bvec(\vvec)}.
\end{equation}
Because every $\theta_s$ is quadratic,
$\avec(-\uvec)=\avec(\uvec)$ and $\bvec(-\vvec)=\bvec(\vvec)$. Hence
$f_{d,m}$ and $g_{d,m}$ are odd almost everywhere on $\R^N$.

The half-space variables can now be removed exactly using the following
lemma. It is a standard consequence of the Gaussian arcsine
law and analytic continuation. It can also be obtained by specializing
BMMN's correlation formula \cite[(2.2), (2.5)]{BMMN}. We include a direct
proof to keep track of our normalization.

\begin{lemma}[Oscillatory half-space identity]\label{lem:halfspace}
For unit vectors $\avec,\bvec\in\R^k$,
\begin{equation}\label{eq:halfspace-identity}
 \frac1{(\sqrt{2}\pi)^k}\iint_{\R^k\times\R^k}
 \sgn\ip{\Xvec}{\avec}\,\sgn\ip{\Yvec}{\bvec}
 e^{-\frac{1}{2}(\norm{\Xvec}^2+\norm{\Yvec}^2)}
 \sin\ip{\Xvec}{\Yvec}\,\dd\Xvec\,\dd\Yvec
 =\frac2\pi\arsinh\ip{\avec}{\bvec}.
\end{equation}
\end{lemma}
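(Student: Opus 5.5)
The plan is to prove \eqref{eq:halfspace-identity} by analytic continuation from the ordinary Gaussian arcsine law \eqref{eq:hyperplane-correlation}, replacing the real correlation by the imaginary one $\iu$. For complex $\lambda$ in the strip $S:=\{\lambda\in\mathbb C:\abs{\operatorname{Re}\lambda}<1\}$, define
\[
 F(\lambda):=\iint_{\R^k\times\R^k}\sgn\ip{\Xvec}{\avec}\,\sgn\ip{\Yvec}{\bvec}\,
 e^{-\frac12(\norm{\Xvec}^2+\norm{\Yvec}^2)+\lambda\ip{\Xvec}{\Yvec}}\,\dd\Xvec\,\dd\Yvec .
\]
Since $\abs{\ip{\Xvec}{\Yvec}}\le\frac12(\norm{\Xvec}^2+\norm{\Yvec}^2)$, the integrand is dominated by $e^{-\frac12(1-\abs{\operatorname{Re}\lambda})(\norm{\Xvec}^2+\norm{\Yvec}^2)}$. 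This bound is locally uniform on $S$, so dominated convergence together with Morera's theorem shows that $F$ is holomorphic on $S$. At $\lambda=\iu$ the factor $e^{\iu\ip{\Xvec}{\Yvec}}$ splits as $\cos+\iu\sin$. Hence the left side of \eqref{eq:halfspace-identity} is exactly $(\sqrt2\pi)^{-k}\operatorname{Im}F(\iu)$; in fact the cosine part vanishes by the odd symmetry $\Xvec\mapsto-\Xvec$.

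\textbf{Step 1 (real $\lambda$).} For $\lambda\in(-1,1)$, the exponent is $-\tfrac12 \zvec^{\mathsf T}(Q\otimes \Id_k)\zvec$ with $Q=\begin{pmatrix}1&-\lambda\\-\lambda&1\end{pmatrix}$. Its total mass is $(2\pi)^k(1-\lambda^2)^{-k/2}$, and the normalized measure is a centered Gaussian pair with covariance $(1-\lambda^2)^{-1}\begin{pmatrix}1&\lambda\\\lambda&1\end{pmatrix}\otimes\Id_k$. Under it, $\ip{\Xvec}{\avec}$ and $\ip{\Yvec}{\bvec}$ are jointly Gaussian with correlation $\lambda\ip{\avec}{\bvec}$. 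The one-dimensional arcsine law \eqref{eq:hyperplane-correlation} is invariant under the common scaling, so it gives
\[
 F(\lambda)=(2\pi)^k(1-\lambda^2)^{-k/2}\,\frac2\pi\arcsin\bigl(\lambda\ip{\avec}{\bvec}\bigr),\qquad \lambda\in(-1,1).
\]

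\textbf{Step 2 (continuation).} Write $c:=\ip{\avec}{\bvec}\in[-1,1]$. The right side, $R(\lambda)$, extends holomorphically to $S$ as follows. Use the principal branch of $(1-\lambda^2)^{-k/2}$; this is legitimate because $1-\lambda^2\in(-\infty,0]$ forces $\lambda$ to be real with $\abs\lambda\ge1$, which lies outside $S$. Use the principal branch of $\arcsin$, whose cuts are $\{w\in\R:\abs w\ge1\}$. The point $\lambda c$ lands on a cut only if $\lambda$ is real with $\abs{\lambda}\ge1/\abs c\ge1$, again outside $S$. The strip $S$ is connected and $F=R$ on $(-1,1)$, so the identity theorem gives $F=R$ on all of $S$. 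In particular, at $\lambda=\iu$ we have $(1-\iu^2)^{-k/2}=2^{-k/2}$ and $\arcsin(\iu c)=\iu\arsinh c$. Therefore $F(\iu)=\iu(\sqrt2\pi)^k\frac2\pi\arsinh c$, and taking imaginary parts yields \eqref{eq:halfspace-identity}.

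The main point requiring care is Step 2. One must check that the chosen branches are analytic on a connected domain containing both $(-1,1)$ and the point $\iu$, and that $F$ is genuinely holomorphic there. The strip $S$ handles both issues, including the degenerate cases $c=\pm1$, where the arcsine cut starts exactly at $\lambda=\pm1$. As a sanity check, the case $k=1$, $\avec=\bvec$ gives $\frac2\pi\arsinh1=\cK$, in agreement with \eqref{eq:halfspace-value-intro}.
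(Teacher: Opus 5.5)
Your proposal is correct and follows essentially the same route as the paper: you establish holomorphy of the unnormalized integral on the strip $\abs{\operatorname{Re}\lambda}<1$, evaluate it on $(-1,1)$ via the arcsine law, continue to $\lambda=\iu$, and kill the cosine (real) part by $\Xvec\mapsto-\Xvec$. The paper computes the real case with an explicit substitution involving $\lambda\Gvec_1+\sqrt{1-\lambda^2}\,\Gvec_2$, where you read the correlation off the Gaussian covariance directly; your check that the principal branches of $(1-\lambda^2)^{-k/2}$ and $\arcsin(\lambda c)$ have no cuts meeting the strip is a careful detail that the paper only asserts.
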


\begin{proof}
Put $\rho=\ip{\avec}{\bvec}$ and, for $|\operatorname{Re}t|<1$, define
\[
 F_k(t):=\frac1{(\sqrt{2}\pi)^k}\iint_{\R^k\times\R^k}
 \sgn\ip{\Xvec}{\avec}\,\sgn\ip{\Yvec}{\bvec}
 e^{-\frac{1}{2}(\norm{\Xvec}^2+\norm{\Yvec}^2)+t\ip{\Xvec}{\Yvec}}
 \,\dd\Xvec\,\dd\Yvec.
\]
On every closed substrip $|\operatorname{Re}t|\leq1-\delta$, the
integrand is dominated by an integrable Gaussian, since
\[
 \operatorname{Re}\bigl(t\ip{\Xvec}{\Yvec}\bigr)
 \leq\frac{|\operatorname{Re}t|}{2}
       (\norm{\Xvec}^2+\norm{\Yvec}^2).
\]
Thus $F_k$ is holomorphic in the strip. To compute it on the real
interval, fix $t\in(-1,1)$ and let $\Gvec_1,\Gvec_2$ be independent
standard Gaussian vectors in $\R^k$. A direct change of variables gives
\[
 F_k(t)=\left(\frac2{1-t^2}\right)^{k/2}
 \E\!\left[
 \sgn\ip{\Gvec_1}{\avec}\,
 \sgn\ip{t\Gvec_1+\sqrt{1-t^2}\,\Gvec_2}{\bvec}
 \right].
\]
In \eqref{eq:hyperplane-correlation}, take the standard Gaussian vector
$(\Gvec_1,\Gvec_2)\in\R^{2k}$ and the two unit vectors
\[
 (\avec,0),
 \qquad
 \bigl(t\bvec,\sqrt{1-t^2}\,\bvec\bigr),
\]
whose inner product is $t\ip{\avec}{\bvec}=t\rho$. Hence
\eqref{eq:hyperplane-correlation} gives
\[
 F_k(t)=\left(\frac2{1-t^2}\right)^{k/2}
        \frac2\pi\arcsin(t\rho).
\]
We now return to complex $t$. Choose the branch of
$(1-t^2)^{-k/2}$ that equals $1$ at $t=0$ and the
branch of $\arcsin$ satisfying $\arcsin(0)=0$ and $\arcsin'(0)=1$.
Both are holomorphic in the strip, so the identity theorem extends the
formula from the real interval throughout the strip. At $t=\iu$, the prefactor is $1$ and
$\arcsin(\iu\rho)=\iu\arsinh\rho$. The real part of $F_k(\iu)$ vanishes
under $\Xvec\mapsto-\Xvec$, while its imaginary part is the left-hand
side of \eqref{eq:halfspace-identity}. This proves the identity.
\end{proof}

Set
\begin{equation}\label{eq:eta-Z}
 \eta_s(\uvec,\vvec):=\theta_s(\uvec)+\theta_s(\vvec),
 \qquad
 Z(\uvec,\vvec):=\frac1m\sum_{s=1}^m\cos\eta_s(\uvec,\vvec).
\end{equation}
In the full K\"onig kernel,
\[
 \sin\bigl(\ip{\Xvec}{\Yvec}+\ip{\uvec}{\vvec}\bigr)
 =\sin\ip{\Xvec}{\Yvec}\cos\ip{\uvec}{\vvec}
  +\cos\ip{\Xvec}{\Yvec}\sin\ip{\uvec}{\vvec}.
\]
The second term has zero $\Xvec,\Yvec$ integral: under
$\Xvec\mapsto-\Xvec$, the factor
$\sgn\ip{\Xvec}{\avec(\uvec)}$ changes sign, whereas
$\sgn\ip{\Yvec}{\bvec(\vvec)}$ and $\cos\ip{\Xvec}{\Yvec}$ do not.
Applying \cref{lem:halfspace} to the first term and using
\eqref{eq:inner-ab} gives
\begin{equation}\label{eq:after-halfspace}
 \BK(f_{d,m},g_{d,m})
 =\frac2\pi\frac1{(\sqrt{2}\pi)^{N_2}}
 \iint e^{-(\norm{\uvec}^2+\norm{\vvec}^2)/2}
 \cos\ip{\uvec}{\vvec}\,\arsinh Z(\uvec,\vvec)
 \,\dd\uvec\,\dd\vvec.
\end{equation}
Now all Boolean discontinuities have
been integrated out, and the remaining integrand is built from smooth
quadratic phases.

Write
\begin{equation}\label{eq:arsinh-series}
 \arsinh z=\sum_{n=0}^\infty(-1)^nc_nz^{2n+1},
 \qquad
 c_n:=\frac{\binom{2n}{n}}{4^n(2n+1)}.
\end{equation}
The numbers $c_n$ are also the positive Taylor coefficients of
$\arcsin z$. Hence
\[
 \sum_{n\geq0}c_n=\arcsin(1)=\frac\pi2.
\]
Fix $n\geq0$. Let $\tau_1,\ldots,\tau_{2n+1}$ be independent and uniform
in $[m]$, and let $\sigma_1,\ldots,\sigma_{2n+1}$ be independent, uniformly
random signs, also independent of the $\tau_t$'s. Write
\[
 \tauvec:=(\tau_1,\ldots,\tau_{2n+1}),\qquad
 \sigmavec:=(\sigma_1,\ldots,\sigma_{2n+1}).
\]
The vector $\tauvec$ records which channel term
$\eta_s(\uvec,\vvec)$ is selected from each factor of $Z(\uvec,\vvec)$:
\[
Z(\uvec,\vvec)=\frac1m\sum_{s=1}^m\cos\eta_s(\uvec,\vvec)
=\sum_{s=1}^m\frac{e^{\iu\eta_s(\uvec,\vvec)}+e^{-\iu\eta_s(\uvec,\vvec)}}{2m}
\]
while $\sigmavec$ records which of the two exponentials is selected.
Define the random frequency vector
$\muvec=\muvec(\tauvec,\sigmavec)\in\Z^m$ by
\begin{equation}\label{eq:mu}
 \begin{aligned}
 \mu_s
 &:=\sum_{t=1}^{2n+1}\sigma_t\mathbf 1_{\{\tau_t=s\}},
 \qquad s\in[m].
 \end{aligned}
\end{equation}
For example, if $\tauvec=(2,5,2)$ and
$\sigmavec=(1,-1,1)$, then the corresponding frequency is
$2\bm e_2-\bm e_5$, where $\bm e_s$ denotes the $s$th standard
coordinate vector.

For $\muvec=(\mu_1,\ldots,\mu_m)\in\Z^m$,
define the phase integral
\begin{equation}\label{eq:I-def}
 \mathcal I_{d,m}(\muvec):=\frac1{(\sqrt{2}\pi)^{N_2}}\iint_{\R^{N_2}\times\R^{N_2}}
 \exp\!\left(-\frac{\norm{\uvec}^2+\norm{\vvec}^2}{2}
 +\iu\ip{\uvec}{\vvec}+\iu\sum_{s=1}^m\mu_s\eta_s(\uvec,\vvec)\right)
 \,\dd\uvec\,\dd\vvec.
\end{equation}
We then set
\begin{equation}\label{eq:T-def}
 T_{d,m,n}:=\E_{\tauvec,\sigmavec}
 \mathcal I_{d,m}\bigl(\muvec(\tauvec,\sigmavec)\bigr).
\end{equation}

\begin{proposition}[Coefficient formula]\label{prop:coefficient-formula}
For every $d,m$,
\begin{equation}\label{eq:coefficient-formula}
 \BK(f_{d,m},g_{d,m})
 =\frac2\pi\sum_{n=0}^\infty(-1)^nc_nT_{d,m,n}.
\end{equation}
\end{proposition}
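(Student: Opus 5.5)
Starting from \eqref{eq:after-halfspace}, I will substitute the series \eqref{eq:arsinh-series} for $\arsinh Z(\uvec,\vvec)$ and exchange summation and integration. Since $|Z|\le1$ pointwise and $\sum_n c_n=\pi/2<\infty$, the partial sums of $\sum_n(-1)^nc_nZ^{2n+1}$ are bounded by $\pi/2$ uniformly in $(\uvec,\vvec)$. Moreover, $e^{-(\norm{\uvec}^2+\norm{\vvec}^2)/2}|\cos\ip{\uvec}{\vvec}|$ is integrable. Dominated convergence (equivalently Fubini with the counting measure on $n$) therefore gives
\[
 \BK(f_{d,m},g_{d,m})=\frac2\pi\sum_{n\ge0}(-1)^nc_n\,J_n,\qquad
 J_n:=\frac1{(\sqrt2\pi)^{N_2}}\iint e^{-(\norm{\uvec}^2+\norm{\vvec}^2)/2}\cos\ip{\uvec}{\vvec}\,Z^{2n+1}\,\dd\uvec\,\dd\vvec .
\]
The series at $z=1$ converges absolutely, so there is no boundary issue at $|Z|=1$.

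It then remains to show $J_n=T_{d,m,n}$. I will expand
\[
 Z^{2n+1}=\Bigl(\sum_{s,\pm}\frac{e^{\pm\iu\eta_s}}{2m}\Bigr)^{2n+1}
 =\E_{\tauvec,\sigmavec}\exp\Bigl(\iu\sum_{t}\sigma_t\eta_{\tau_t}\Bigr)
 =\E_{\tauvec,\sigmavec}\exp\Bigl(\iu\sum_s\mu_s\eta_s\Bigr),
\]
which is a finite average, so it commutes with the integral. Next I will write $\cos\ip{\uvec}{\vvec}=\tfrac12(e^{\iu\ip{\uvec}{\vvec}}+e^{-\iu\ip{\uvec}{\vvec}})$. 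Under the change of variables $\vvec\mapsto-\vvec$, the Gaussian weight and every $\eta_s$ are unchanged, because $\theta_s$ is quadratic and hence even, while $\ip{\uvec}{\vvec}$ flips sign. So the two halves give equal integrals, and
\[
 J_n=\E_{\tauvec,\sigmavec}\,\mathcal I_{d,m}(\muvec),
\]
which is $T_{d,m,n}$ by \eqref{eq:T-def}. Each integrand has modulus at most the Gaussian weight, so all these integrals converge absolutely and the manipulations are legitimate.

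The only point needing care is the interchange of the infinite sum with the integral. The uniform bound by $\sum c_n$ handles it, and the parity step in $\vvec$ is routine. I therefore expect no real obstacle beyond recording these domination bounds.
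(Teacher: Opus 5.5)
Your proposal is correct and follows the paper's proof essentially step for step. The three steps are the same: term-by-term integration justified by $\sum_n c_n=\pi/2$ together with $|Z|\le 1$; the expansion of $Z^{2n+1}$ as an average over $(\tauvec,\sigmavec)$; and splitting $\cos\ip{\uvec}{\vvec}$ into two exponentials whose integrals agree under $\vvec\mapsto-\vvec$ because each $\theta_s$ is even. Your explicit dominated-convergence bounds just spell out what the paper summarizes as ``integrate term by term.''
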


\begin{proof}
Since $\sum_{n\geq0}c_n=\pi/2$, the series in
\eqref{eq:arsinh-series} converges absolutely and uniformly for
$z\in[-1,1]$. Moreover, $Z(\uvec,\vvec)$ is an average of cosines, so
$Z(\uvec,\vvec)\in[-1,1]$. We may therefore insert the series into
\eqref{eq:after-halfspace} and integrate term by term. For each fixed
$n$, the identity $\cos x=(e^{\iu x}+e^{-\iu x})/2$ and
\eqref{eq:mu} give
\begin{align}\label{eq:random-cosine-expansion}
 Z(\uvec,\vvec)^{2n+1}
 &=\frac1{2^{2n+1}m^{2n+1}}
 \sum_{\tauvec\in[m]^{2n+1}}
 \sum_{\sigmavec\in\{\pm1\}^{2n+1}}
 \exp\!\left(\iu\sum_{s=1}^m
 \mu_s\eta_s(\uvec,\vvec)\right)\notag\\
 &=\E_{\tauvec,\sigmavec}
 \exp\!\left(\iu\sum_{s=1}^m
 \mu_s\eta_s(\uvec,\vvec)\right).
\end{align}
Now write
$\cos\ip{\uvec}{\vvec}$ as the average of
$e^{\iu\ip{\uvec}{\vvec}}$ and
$e^{-\iu\ip{\uvec}{\vvec}}$. The two resulting integrals agree under
$\vvec\mapsto-\vvec$, because every $\theta_s(\vvec)$ is even. Thus the
integral associated with a fixed pair $(\tauvec,\sigmavec)$ is exactly
\eqref{eq:I-def}, and averaging gives \eqref{eq:coefficient-formula}.
\end{proof}

We will prove in \cref{prop:block-factorization} that
$|T_{d,m,n}|\leq1$. This bound, together with
$\frac2\pi\sum_{n\geq0}c_n=1$, shows at once from
\eqref{eq:coefficient-formula} that
$|\BK(f_{d,m},g_{d,m})|\leq1$. Thus it remains to prove that, for each
fixed $n$, the signed coefficient approaches its extremal value:
\[
 \lim_{m\to\infty}\lim_{d\to\infty}T_{d,m,n}=(-1)^n.
 \]

\section{Gaussian determinants and the phase integral}
\label{sec:determinant}

We now compute $\mathcal I_{d,m}(\muvec)$ by reducing it to the analysis
of $3\times3$ matrices. For $a,b,c\in\R$, put
\begin{equation}\label{eq:Aabc}
 A(a,b,c):=\begin{pmatrix}0&c&b\\ c&0&a\\ b&a&0\end{pmatrix},\qquad p:=a^2+b^2+c^2,\qquad q:=abc.
\end{equation}
For $\xvec=(\alpha,\beta,\gamma)^{\mathsf T}$, one has 
\[
 \frac{1}{2}\xvec^{\mathsf T}A(a,b,c)\xvec
 =a\beta\gamma+b\alpha\gamma+c\alpha\beta.
\]
Define
\begin{equation}\label{eq:Delta-phi}
 \Delta_\eps(a,b,c):=\det\!\left(\Id-\iu\eps A(a,b,c)-\frac{\eps^2}{2}A(a,b,c)^2\right).
\end{equation}
A direct expansion gives
\[
 \det(t\Id-A)=t^3-pt-2q,
\]
and
\[
 \Id-\iu\eps A-\frac{\eps^2}{2}A^2
 =\frac12\bigl((1-\iu)\Id-\iu\eps A\bigr)
          \bigl((1+\iu)\Id-\iu\eps A\bigr).
\]
Since
\[
 \det(\alpha\Id-\iu\eps A)
 =\alpha^3+\alpha p\eps^2+2\iu q\eps^3,
\]
substitution of $\alpha=1\pm\iu$ gives
\begin{equation}\label{eq:exact-determinant}
 \Delta_\eps(a,b,c)=1-\iu q\eps^3+\frac{p^2}{4}\eps^4+\frac{\iu pq}{2}\eps^5-\frac{q^2}{2}\eps^6.
\end{equation}
In particular, the linear and quadratic terms vanish. If $\lambda$ is
an eigenvalue of $A$, then
\begin{equation}\label{eq:contractive-factor}
 \left|1-\iu\eps\lambda-\frac{\eps^2\lambda^2}{2}\right|^2
 =1+\frac{\eps^4\lambda^4}{4}.
\end{equation}
Consequently $\Delta_\eps(a,b,c)$ never vanishes for real $\eps$ and
$|\Delta_\eps(a,b,c)|\geq1$.

\begin{proposition}[Gaussian determinant formula]\label{prop:determinant}
For $a,b,c,\eps\in\R$, with $A=A(a,b,c)$, let
\begin{equation}\label{eq:determinant-formula}
 \phi_\eps(a,b,c):=\frac1{(\sqrt{2}\pi)^3}\iint_{\R^3\times\R^3}\exp\!\left(-\frac{\norm{\xvec}^2+\norm{\yvec}^2}{2}+\iu\ip{\xvec}{\yvec}+\frac{\iu\eps}{2}\xvec^{\mathsf T}A\xvec+\frac{\iu\eps}{2}\yvec^{\mathsf T}A\yvec\right)\dd\xvec\,\dd\yvec.
\end{equation}
Then
\begin{equation}\label{eq:determinant-evaluation}
 \phi_\eps(a,b,c)=\Delta_\eps(a,b,c)^{-1/2},
\end{equation}
where the inverse square root is the continuous branch for real $\eps$
that equals $1$ at $\eps=0$. In particular, $\phi_0(a,b,c)=1$ and
$|\phi_\eps(a,b,c)|\leq1$. Moreover,
\begin{equation}\label{eq:conjugation}
 \phi_\eps(-a,-b,-c)=\overline{\phi_\eps(a,b,c)}.
\end{equation}
Finally, there are absolute constants $c_0,C>0$ such that, whenever
$L\geq1$, $|a|,|b|,|c|\leq L$, and $|\eps|L\leq c_0$,
\begin{equation}\label{eq:local-three-cycle}
 \phi_\eps(a,b,c)=1+\frac{\iu abc\eps^3}{2}+O(L^4\eps^4),\qquad
 \left|e^{-\iu abc\eps^3/2}\phi_\eps(a,b,c)-1\right|
 \leq CL^4|\eps|^4.
\end{equation}
\end{proposition}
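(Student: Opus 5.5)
The plan is to write the exponent as a complex quadratic form in $\zvec=(\xvec,\yvec)\in\R^6$ and apply the squared Gaussian identity \eqref{eq:intro-Gaussian-formula}.

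\textbf{Step 1: the matrix and the squared identity.} The exponent equals $-\tfrac12\zvec^{\mathsf T}M\zvec$ with
\[
 M=\begin{pmatrix}\Id-\iu\eps A&-\iu\Id\\-\iu\Id&\Id-\iu\eps A\end{pmatrix}.
\]
This matrix is complex symmetric, and its real part is $\Id_6\succ0$, so the integral converges absolutely. The normalization $(\sqrt2\pi)^{-3}=(2\pi)^{-3}\cdot 2^{3/2}$ differs from the $(2\pi)^{-3}$ in \eqref{eq:intro-Gaussian-formula}, so the squared identity gives
\[
 \phi_\eps^2=\frac{8}{\det M}.
\]
Both diagonal blocks are polynomials in $A$ and therefore commute. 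Hence
\[
 \det M=\det\bigl((\Id-\iu\eps A)^2+\Id\bigr)=\det\bigl(2\Id-2\iu\eps A-\eps^2A^2\bigr)=8\,\Delta_\eps.
\]
This yields $\phi_\eps^2=\Delta_\eps^{-1}$.

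\textbf{Step 2: branch, modulus and conjugation.} By dominated convergence, $\phi_\eps$ is continuous in $\eps$. A direct check, or Step 1 at $\eps=0$, gives $\phi_0=1$. Since $\Delta_\eps\neq0$ for all real $\eps$, the function $\phi_\eps$ is a continuous square root of $\Delta_\eps^{-1}$ on the connected set $\R$. It is therefore the unique continuous branch equal to $1$ at $\eps=0$. The bound $|\Delta_\eps|\ge1$ from \eqref{eq:contractive-factor} then gives $|\phi_\eps|\le1$.

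For \eqref{eq:conjugation}, replace $A$ by $-A$ in the integrand and substitute $\yvec\mapsto-\yvec$. This turns $\iu\ip{\xvec}{\yvec}$ into $-\iu\ip{\xvec}{\yvec}$ and leaves the quadratic terms unchanged. The exponent is then exactly the complex conjugate of the original one, so the integral equals $\overline{\phi_\eps(a,b,c)}$.

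\textbf{Step 3: local expansion.} This is the step needing the most care. Under $|a|,|b|,|c|\le L$ we have $p\le3L^2$ and $|q|\le L^3$. With $x=L|\eps|\le c_0$, the exact formula \eqref{eq:exact-determinant} gives
\[
 \Delta_\eps=1-\iu q\eps^3+R,\qquad |R|\le C x^4 .
\]
Choose $c_0$ small enough that $|\Delta_\eps-1|\le\tfrac12$. The principal branch of $w\mapsto w^{-1/2}$ on the disc $|w-1|\le\tfrac12$ is continuous. Along the path $t\mapsto\Delta_{t\eps}$, $t\in[0,1]$, the whole path stays in this disc, so this principal branch coincides with the continuous branch of Step 2 (they agree at $t=0$).

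Taylor expansion then gives
\[
 \Delta_\eps^{-1/2}=1+\tfrac{\iu q\eps^3}{2}+O(x^4),
\]
since the squared term $(q\eps^3)^2$ is $O(x^6)\subset O(x^4)$. Likewise
\[
 e^{-\iu q\eps^3/2}=1-\tfrac{\iu q\eps^3}{2}+O(x^6).
\]
Multiplying the two expansions, the cubic terms cancel, which gives
\[
 \bigl|e^{-\iu abc\eps^3/2}\phi_\eps(a,b,c)-1\bigr|\le CL^4|\eps|^4 .
\]
This establishes \eqref{eq:local-three-cycle}.
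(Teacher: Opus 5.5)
Your proof is correct and is essentially the paper's: the same $6\times 6$ matrix, $\det\mathcal M=8\Delta_\eps$ (the paper block-diagonalizes with an orthogonal $\mathcal U$ where you use the commuting-block determinant formula), the same $\yvec\mapsto-\yvec$ conjugation, and the same Taylor expansion of $z^{-1/2}$ near $1$. Your branch identification, via uniqueness of continuous square roots in $\eps$ and the explicit path $t\mapsto\Delta_{t\eps}$, is a slightly more self-contained version of the paper's appeal to Folland's holomorphic branch. One imprecision: Step 1 at $\eps=0$ gives only $\phi_0^2=1$, so the sign $\phi_0=1$ really rests on the direct computation (integrate out $\yvec$ to get $(2\pi)^{3/2}e^{-\norm{\xvec}^2/2}$, then use $\int e^{-\norm{\xvec}^2}\,\dd\xvec=\pi^{3/2}$).
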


\begin{proof}
For the integral in \eqref{eq:determinant-formula}, put
$\zvec=(\xvec,\yvec)\in\R^6$. Its exponent is
$-\zvec^{\mathsf T}\mathcal M\zvec/2$, where
\[
 \mathcal M=\begin{pmatrix}\Id-\iu\eps A&-\iu\Id\\-\iu\Id&\Id-\iu\eps A\end{pmatrix}.
\]
The matrix $\mathcal M$ is complex symmetric, and its real part is the
identity. Therefore the standard complex Gaussian identity
\cite[Appendix~A, Theorem~1]{FOLLAND}, evaluated at zero and rescaled to our
normalization, gives
\[
 \frac1{(2\pi)^3}\int_{\R^6}
 e^{-\zvec^{\mathsf T}\mathcal M\zvec/2}\,\dd\zvec
 =(\det\mathcal M)^{-1/2}.
\]
Here the inverse square root is the holomorphic branch on the complex
symmetric matrices with positive-definite real part obtained by
continuation from real positive-definite matrices, where it is positive.
Since $\phi_\eps(a,b,c)$ is $2^{3/2}$ times the normalized integral
above, it remains only to compute $\det\mathcal M$. The orthogonal matrix
\[
 \mathcal U:=\frac1{\sqrt2}
 \begin{pmatrix}\Id&\Id\\ \Id&-\Id\end{pmatrix}
\]
gives
\[
 \mathcal U^{\mathsf T}\mathcal M\mathcal U=\begin{pmatrix}\Id-\iu\eps A-\iu\Id&0\\0&\Id-\iu\eps A+\iu\Id\end{pmatrix}.
\]
The two blocks commute and their product is
$2(\Id-\iu\eps A-\eps^2A^2/2)$, so
$\det\mathcal M=2^3\Delta_\eps(a,b,c)$. Therefore
\[
 \phi_\eps(a,b,c)=\Delta_\eps(a,b,c)^{-1/2},
\]
with the branch inherited from the preceding identity.
At $\eps=0$, integrating first in $\yvec$ gives
$(2\pi)^{3/2}e^{-\norm{\xvec}^2/2}$; the remaining integral equals
$\pi^{3/2}$, and the normalization in
\eqref{eq:determinant-formula} gives $\phi_0(a,b,c)=1$.

The bound $|\phi_\eps(a,b,c)|\leq1$ follows from
\eqref{eq:determinant-evaluation} and \eqref{eq:contractive-factor}.
Identity \eqref{eq:conjugation} follows directly from
\eqref{eq:determinant-formula}, followed by the change of variables
$\yvec\mapsto-\yvec$.

We finally prove the local expansion. Under the stated assumptions,
\eqref{eq:exact-determinant} gives
\[
 w:=\Delta_\eps(a,b,c)-1
 =-\iu q\eps^3+O(L^4\eps^4).
\]
Since $L|\eps|\leq c_0$, this implies
\[
 |w|^2\leq CL^6|\eps|^6
 \leq Cc_0^2L^4|\eps|^4.
\]
After decreasing $c_0$ if necessary, $|w|<1/2$. On this neighborhood of
$1$, let $\varphi(z)=z^{-1/2}$ denote the analytic branch satisfying
$\varphi(1)=1$. The other branch is $-\varphi$ and equals $-1$ at $1$, so it is excluded by
$\phi_0(a,b,c)=1$. Thus \eqref{eq:determinant-evaluation} and the Taylor
expansion
\[
 \varphi(1+w)=1-\frac12w+O(w^2)
\]
give
\[
 \phi_\eps(a,b,c)
 =1+\frac{\iu q\eps^3}{2}+O(L^4\eps^4).
\]
Finally,
$e^{-\iu q\eps^3/2}=1-\iu q\eps^3/2+O(q^2\eps^6)$ and
$q^2\eps^6=O(L^4\eps^4)$ in the same range, proving
\eqref{eq:local-three-cycle}.
\end{proof}

\begin{corollary}[One triple produces one parity bit]\label{cor:one-triple}
There is an absolute constant $C$ such that, for every $L\geq1$, all
integers $a,b,c$ satisfying $|a|,|b|,|c|\leq L$, and every
$d\geq CL^3$,
\begin{equation}\label{eq:one-triple}
 \left|\phi_{\eps_d}(a,b,c)^d-(-1)^{abc}\right|\leq CL^4d^{-1/3}.
\end{equation}
\end{corollary}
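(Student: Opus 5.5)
We apply the local expansion \eqref{eq:local-three-cycle} of \cref{prop:determinant} with $\eps=\eps_d=(2\pi/d)^{1/3}$, and then pass to the $d$-th power by a telescoping estimate.

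First we check that we are in the admissible range. The condition $|\eps_d|L\le c_0$ is equivalent to $d\ge 2\pi L^3/c_0^3$. This holds once the constant $C$ in the hypothesis $d\ge CL^3$ is at least $2\pi c_0^{-3}$.

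Write $\phi_{\eps_d}(a,b,c)=e^{\iu abc\eps_d^3/2}\,\psi$. By \eqref{eq:local-three-cycle},
\[
 |\psi-1|\le CL^4\eps_d^4=C(2\pi)^{4/3}L^4d^{-4/3}.
\]
Since $\eps_d^3=2\pi/d$, raising to the $d$-th power gives
\[
 \phi_{\eps_d}(a,b,c)^d=e^{\iu\pi abc}\psi^d=(-1)^{abc}\psi^d,
\]
where we use that $abc$ is an integer. Hence
\[
 \bigl|\phi_{\eps_d}(a,b,c)^d-(-1)^{abc}\bigr|=|\psi^d-1|.
\]

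The only remaining step is to bound $|\psi^d-1|$. We have $|\psi|=|\phi_{\eps_d}(a,b,c)|\le1$ by \cref{prop:determinant}. The elementary telescoping inequality $|z^d-1|\le d|z-1|$, valid for $|z|\le1$, then yields
\[
 |\psi^d-1|\le d\cdot C(2\pi)^{4/3}L^4d^{-4/3}=C'L^4d^{-1/3}.
\]
Enlarging the absolute constant $C$ to cover both the range condition and $C'$ gives \eqref{eq:one-triple}.

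The main point requiring care is that the phase factor $e^{\iu abc\eps^3/2}$ must be extracted exactly before taking powers. A naive expansion of $\phi^d$ would produce a cross error of order $d\cdot q^2\eps^6$, and this is not small. The second estimate in \eqref{eq:local-three-cycle} is already stated in exactly this form, so the argument goes through without extra work.
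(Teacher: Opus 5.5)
Your proof is correct and follows the paper's argument step for step: the range check $L|\eps_d|\le c_0$, exact extraction of the phase $e^{\iu abc\eps_d^3/2}$, contractivity together with $|z^d-1|\le d|z-1|$, and $d\eps_d^3=2\pi$. One small caveat on your closing aside: that cross term is actually harmless. Since $|abc|\le L^3$ and $L|\eps_d|\le c_0$, one has $d(abc)^2\eps_d^6\le c_0^2\,dL^4\eps_d^4\lesssim L^4d^{-1/3}$, and this is exactly how the paper absorbs it into the second estimate of \eqref{eq:local-three-cycle}.
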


\begin{proof}
After enlarging $C$ if necessary, the hypothesis $d\geq CL^3$ ensures
\[
 |\eps_d|L\leq c_0,
\]
where $c_0$ is the constant above. Set
\[
 R_d:=e^{-\iu abc\eps_d^3/2}\phi_{\eps_d}(a,b,c).
\]
By \eqref{eq:local-three-cycle},
\[
 |R_d-1|\leq CL^4d^{-4/3}.
\]
Moreover, contractivity gives $|R_d|\leq1$. Therefore
\[
 |R_d^d-1|
 \leq |R_d-1|\sum_{r=0}^{d-1}|R_d|^r
 \leq d|R_d-1|
 \leq CL^4d^{-1/3}.
\]
Finally, $d\eps_d^3=2\pi$, and hence
\[
 \phi_{\eps_d}(a,b,c)^d
 =e^{\iu\pi abc}R_d^d
 =(-1)^{abc}R_d^d.
\]
It follows that
\[
 \left|\phi_{\eps_d}(a,b,c)^d-(-1)^{abc}\right|
 =|R_d^d-1|
 \leq CL^4d^{-1/3}.
\]
\end{proof}

For $\muvec\in\Z^m$ and $j<k<\ell$, set
\begin{equation}\label{eq:Ajkl}
 A_{jk\ell}(\muvec):=A(\mu_j,\mu_k,\mu_\ell)=\begin{pmatrix}0&\mu_\ell&\mu_k\\\mu_\ell&0&\mu_j\\\mu_k&\mu_j&0\end{pmatrix},
\end{equation}
and define
\begin{equation}\label{eq:Phi-def}
 \Phi_{d,m}(\muvec):=\prod_{1\leq j<k<\ell\leq m}\phi_{\eps_d}(\mu_j,\mu_k,\mu_\ell)^d.
\end{equation}
Here and below, the exponent $d$ denotes an ordinary integer power,
that is, repeated multiplication of the block integral.

\begin{proposition}[Block factorization]\label{prop:block-factorization}
For every $\muvec\in\Z^m$,
\begin{equation}\label{eq:block-factorization}
 \mathcal I_{d,m}(\muvec)=\Phi_{d,m}(\muvec).
\end{equation}
Consequently $|\mathcal I_{d,m}(\muvec)|\leq1$ and
$|T_{d,m,n}|\leq1$.
\end{proposition}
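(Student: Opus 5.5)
The plan is to rewrite the exponent in \eqref{eq:I-def} as a sum over blocks $(j,k,\ell,r)$ of exponents that each involve only the block variables $\uvec^{(r)}_{jk\ell},\vvec^{(r)}_{jk\ell}$. Then Fubini factorizes the integral into a product of $d\binom m3$ six-dimensional integrals, and each factor is recognized as $\phi_{\eps_d}(\mu_j,\mu_k,\mu_\ell)$ from \cref{prop:determinant}.

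\textbf{Step 1: the Gaussian part splits.} The terms $\norm{\uvec}^2$, $\norm{\vvec}^2$ and $\ip{\uvec}{\vvec}$ decompose as sums over blocks, since the blocks are orthogonal coordinate groups. The normalization also splits: $(\sqrt2\pi)^{N_2}=\prod_{\text{blocks}}(\sqrt2\pi)^3$.

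\textbf{Step 2: regroup the phase sum by blocks.} Exchange the order of summation in
\[
 \sum_{s=1}^m\mu_s\theta_s(\uvec).
\]
By \eqref{eq:theta}, the block $(j,k,\ell,r)$ appears exactly in $\theta_j$ (through $\beta\gamma$), in $\theta_k$ (through $\alpha\gamma$) and in $\theta_\ell$ (through $\alpha\beta$), and in no other $\theta_s$. Indeed, each of the three sums in \eqref{eq:theta} ranges over the triples containing $s$ in the first, middle or last position respectively. The total contribution of the block is therefore
\[
 \eps_d\bigl(\mu_j\beta\gamma+\mu_k\alpha\gamma+\mu_\ell\alpha\beta\bigr)
 =\frac{\eps_d}{2}\,\bigl(\uvec^{(r)}_{jk\ell}\bigr)^{\mathsf T}A(\mu_j,\mu_k,\mu_\ell)\,\uvec^{(r)}_{jk\ell},
\]
using the identity stated after \eqref{eq:Aabc}. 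The same computation applies to $\vvec$. The key bookkeeping is to check this assignment carefully: each channel index $s$ in a triple must pick up exactly the product of the other two coordinates, matching the entries $a\leftrightarrow(2,3)$, $b\leftrightarrow(1,3)$, $c\leftrightarrow(1,2)$ of $A(a,b,c)$. I expect this to be the only real point requiring care.

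\textbf{Step 3: factorize and conclude.} The integrand is now a product over blocks. Each factor is dominated in absolute value by $e^{-(\norm{\xvec}^2+\norm{\yvec}^2)/2}$, because the imaginary terms have modulus one. Fubini therefore applies and gives
\[
 \mathcal I_{d,m}(\muvec)=\prod_{j<k<\ell}\prod_{r=1}^d\phi_{\eps_d}(\mu_j,\mu_k,\mu_\ell),
\]
where each factor is exactly \eqref{eq:determinant-formula}. The product is independent of $r$, so it equals $\Phi_{d,m}(\muvec)$.

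Finally, $|\phi_\eps|\le1$ by \cref{prop:determinant}, so $|\mathcal I_{d,m}(\muvec)|\le1$. Averaging over $(\tauvec,\sigmavec)$ in \eqref{eq:T-def} then gives $|T_{d,m,n}|\le1$.
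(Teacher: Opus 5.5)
Your proposal is correct and follows essentially the same route as the paper: you regroup $\sum_s\mu_s\theta_s$ by triples into the quadratic forms $\frac{\eps_d}{2}(\uvec^{(r)}_{jk\ell})^{\mathsf T}A(\mu_j,\mu_k,\mu_\ell)\uvec^{(r)}_{jk\ell}$, split the Gaussian part and the normalization over blocks, apply Fubini (justified because the added terms are purely imaginary), identify each factor as $\phi_{\eps_d}(\mu_j,\mu_k,\mu_\ell)$, and deduce both bounds from $|\phi_\eps|\le1$. Your bookkeeping of which phase receives which product agrees with \eqref{eq:theta} and with the identity $\frac12\xvec^{\mathsf T}A(a,b,c)\xvec=a\beta\gamma+b\alpha\gamma+c\alpha\beta$.
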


\begin{proof}
For one block $\uvec^{(r)}_{jk\ell}=(\alpha,\beta,\gamma)^{\mathsf T}=(\alpha^{(r)}_{jk\ell},\beta^{(r)}_{jk\ell},\gamma^{(r)}_{jk\ell})^{\mathsf T}$, the phases
$\theta_j,\theta_k,\theta_\ell$ receive respectively
$\eps_d\beta\gamma$, $\eps_d\alpha\gamma$, and
$\eps_d\alpha\beta$. Indeed, this block occurs in the first sum of
$\theta_j$, the second sum of $\theta_k$, and the third sum of
$\theta_\ell$. Thus its contribution to
$\sum_{s=1}^m\mu_s\theta_s(\uvec)$ is
\[
 \eps_d\bigl(\mu_j\beta\gamma
 +\mu_k\alpha\gamma+\mu_\ell\alpha\beta\bigr)
 =\frac{\eps_d}{2}
 (\alpha,\beta,\gamma)A_{jk\ell}(\muvec)
 (\alpha,\beta,\gamma)^{\mathsf T}.
\]
Reindexing all the phase sums by triples therefore gives
\begin{align}
 \sum_{s=1}^m\mu_s\theta_s(\uvec)
 &=\eps_d\sum_{j<k<\ell}\sum_{r=1}^d
 \bigl(\mu_j\beta^{(r)}_{jk\ell}\gamma^{(r)}_{jk\ell}
      +\mu_k\alpha^{(r)}_{jk\ell}\gamma^{(r)}_{jk\ell}
      +\mu_\ell\alpha^{(r)}_{jk\ell}\beta^{(r)}_{jk\ell}\bigr)\notag\\
 &=\frac{\eps_d}{2}\sum_{j<k<\ell}\sum_{r=1}^d
 (\uvec^{(r)}_{jk\ell})^{\mathsf T}
 A_{jk\ell}(\muvec)\uvec^{(r)}_{jk\ell}.
 \label{eq:block-phase}
\end{align}
The same identity holds for $\vvec$. Moreover,
\[
 \begin{aligned}
 \norm{\uvec}^2
 &=\sum_{j<k<\ell}\sum_{r=1}^d
   \norm{\uvec^{(r)}_{jk\ell}}^2,
 &\norm{\vvec}^2
 &=\sum_{j<k<\ell}\sum_{r=1}^d
   \norm{\vvec^{(r)}_{jk\ell}}^2,\\
 \ip{\uvec}{\vvec}
 &=\sum_{j<k<\ell}\sum_{r=1}^d
   \ip{\uvec^{(r)}_{jk\ell}}{\vvec^{(r)}_{jk\ell}},
 &\frac{\dd\uvec\,\dd\vvec}{(\sqrt{2}\pi)^{N_2}}
 &=\prod_{j<k<\ell}\prod_{r=1}^d
   \frac{\dd\uvec^{(r)}_{jk\ell}\,\dd\vvec^{(r)}_{jk\ell}}
        {(\sqrt{2}\pi)^3}.
 \end{aligned}
\]
Thus the exponent and measure in \eqref{eq:I-def} split over the
blocks $(j,k,\ell;r)$. For a fixed block, writing
$\xvec=\uvec^{(r)}_{jk\ell}$ and
$\yvec=\vvec^{(r)}_{jk\ell}$, its exponent is
\[
 -\frac{\norm{\xvec}^2+\norm{\yvec}^2}{2}
 +\iu\ip{\xvec}{\yvec}
 +\frac{\iu\eps_d}{2}\xvec^{\mathsf T}A_{jk\ell}(\muvec)\xvec
 +\frac{\iu\eps_d}{2}\yvec^{\mathsf T}A_{jk\ell}(\muvec)\yvec.
\]
The additional quadratic terms are purely imaginary, so Fubini's
theorem applies. By \eqref{eq:determinant-formula}, each block integral
is $\phi_{\eps_d}(\mu_j,\mu_k,\mu_\ell)$, and hence
\[
 \mathcal I_{d,m}(\muvec)
 =\prod_{j<k<\ell}\prod_{r=1}^d
   \phi_{\eps_d}(\mu_j,\mu_k,\mu_\ell)
 =\Phi_{d,m}(\muvec).
\]
The factorization and \eqref{eq:contractive-factor} give
$|\mathcal I_{d,m}(\muvec)|\leq1$. Therefore
\[
 |T_{d,m,n}|
 \leq\E_{\tauvec,\sigmavec}
 \left|\mathcal I_{d,m}(\muvec)\right|
 \leq1.
\]
\end{proof}

\section{Cubic parity and quantitative estimates}\label{sec:parity}

For $\muvec\in\Z^m$, write
\begin{equation}\label{eq:e3}
 e_3(\muvec):=\sum_{1\leq j<k<\ell\leq m}\mu_j\mu_k\mu_\ell.
\end{equation}
The one-triple estimate immediately yields a uniform product estimate.

\begin{lemma}[Global determinant convergence]\label{lem:global}
If $|\mu_s|\leq L$ for every $s$, then, for $d\geq CL^3$,
\begin{equation}\label{eq:global}
 \left|\Phi_{d,m}(\muvec)-(-1)^{e_3(\muvec)}\right|
 \leq Cm^3L^4d^{-1/3}.
\end{equation}
\end{lemma}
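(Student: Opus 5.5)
The plan is a telescoping argument over the triples, with \cref{cor:one-triple} as the only input. By \eqref{eq:Phi-def}, $\Phi_{d,m}(\muvec)$ is a product over the $M:=\binom m3$ triples $j<k<\ell$ of the factors $w_{jk\ell}:=\phi_{\eps_d}(\mu_j,\mu_k,\mu_\ell)^d$. The sign $(-1)^{e_3(\muvec)}$ is the product of the targets $z_{jk\ell}:=(-1)^{\mu_j\mu_k\mu_\ell}$. This holds because $e_3(\muvec)$ is the sum of the integers $\mu_j\mu_k\mu_\ell$, and $(-1)^{x+y}=(-1)^x(-1)^y$ for integers.

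Since the $\mu_s$ are integers with $|\mu_s|\le L$, \cref{cor:one-triple} applies to every triple as soon as $d\ge CL^3$. It gives $|w_{jk\ell}-z_{jk\ell}|\le CL^4d^{-1/3}$ for each triple. We also have $|w_{jk\ell}|\le1$ by the contractivity $|\phi_\eps|\le1$ in \cref{prop:determinant}, and clearly $|z_{jk\ell}|=1$.

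Next we apply the standard telescoping inequality for complex numbers of modulus at most one:
\[
\Bigl|\prod_{i=1}^M w_i-\prod_{i=1}^M z_i\Bigr|\le\sum_{i=1}^M|w_i-z_i|.
\]
It follows by writing the difference as $\sum_i w_1\cdots w_{i-1}(w_i-z_i)z_{i+1}\cdots z_M$ and bounding each prefix and suffix product by $1$. This yields the bound $\binom m3 CL^4d^{-1/3}\le Cm^3L^4d^{-1/3}$, which is \eqref{eq:global}, with the same constant $C$ after noting $\binom m3\le m^3$.

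There is no real obstacle. The only point needing care is the uniformity of the constant in \cref{cor:one-triple}: it must not depend on the particular triple or on $m$. This holds since the corollary's constant is absolute and the hypothesis $d\ge CL^3$ is independent of the triple.
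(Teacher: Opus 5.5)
Your proposal is correct and matches the paper's proof: you apply the telescoping bound $\bigl|\prod w_\alpha-\prod z_\alpha\bigr|\le\sum|w_\alpha-z_\alpha|$ for factors of modulus at most one, factor $(-1)^{e_3(\muvec)}$ over the triples, and feed in contractivity together with \cref{cor:one-triple}. This gives $C\binom m3L^4d^{-1/3}\le Cm^3L^4d^{-1/3}$.
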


\begin{proof}
If $|z_\alpha|,|w_\alpha|\leq1$, then
\[
 \left|\prod_\alpha z_\alpha-\prod_\alpha w_\alpha\right|
 \leq\sum_\alpha|z_\alpha-w_\alpha|.
\]
Using
$\prod_{j<k<\ell}(-1)^{\mu_j\mu_k\mu_\ell}
=(-1)^{e_3(\muvec)}$, the contractivity bound, and
\cref{cor:one-triple}, we obtain
\begin{align*}
 \left|\Phi_{d,m}(\muvec)-(-1)^{e_3(\muvec)}\right|
 &\leq\sum_{j<k<\ell}
 \left|\phi_{\eps_d}(\mu_j,\mu_k,\mu_\ell)^d
       -(-1)^{\mu_j\mu_k\mu_\ell}\right|\\
 &\leq C\binom m3L^4d^{-1/3}
 \leq Cm^3L^4d^{-1/3}.
\end{align*}
\end{proof}

For the random frequency vector in \eqref{eq:mu}, define
\begin{equation}\label{eq:E-mn}
 E_{m,n}:=\E_{\tauvec,\sigmavec}
 (-1)^{e_3(\muvec(\tauvec,\sigmavec))}.
\end{equation}
The remaining combinatorial step is to show that the cubic character
$(-1)^{e_3(\muvec)}$ agrees, with high probability, with the sign
$(-1)^n$ attached to the degree $2n+1$.

\begin{lemma}[Collision-free parity]\label{lem:collision}
For every $m\geq1$ and $n\geq0$,
\begin{equation}\label{eq:collision}
 |E_{m,n}-(-1)^n|\leq\frac{(2n+1)(2n)}m.
\end{equation}
\end{lemma}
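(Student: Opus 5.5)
The plan is to split the expectation defining $E_{m,n}$ according to whether the channel indices $\tau_1,\ldots,\tau_{2n+1}$ are pairwise distinct. Let $G$ denote the collision-free event $\{\tau_t\neq\tau_{t'}\text{ for all }t\neq t'\}$. Since $|(-1)^{e_3(\muvec)}-(-1)^n|\leq2$ always, we obtain
\[
 |E_{m,n}-(-1)^n|\leq \E\Bigl[\bigl|(-1)^{e_3(\muvec)}-(-1)^n\bigr|\mathbf 1_{G}\Bigr]+2\,\mathbb P(G^c).
\]
It therefore suffices to show that the first term vanishes identically and that $2\,\mathbb P(G^c)\leq(2n+1)(2n)/m$.

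\emph{Step 1: exact parity on $G$.} On $G$, the vector $\muvec$ has exactly $2n+1$ nonzero coordinates, namely $\mu_{\tau_t}=\sigma_t\in\{\pm1\}$, and all other coordinates vanish. Every nonzero term of $e_3(\muvec)$ is then a product of three signs, hence $\pm1$, which is odd. The number of such terms is $\binom{2n+1}{3}$, so
\[
 e_3(\muvec)\equiv\binom{2n+1}{3}\pmod 2.
\]
It remains to verify that $\binom{2n+1}{3}=\frac{(2n+1)(2n)(2n-1)}{6}=\frac{n(2n+1)(2n-1)}{3}$ has the parity of $n$. Since $(2n+1)(2n-1)=4n^2-1$ is odd, the numerator $n(4n^2-1)$ has the parity of $n$. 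Dividing by the odd number $3$ preserves parity, because the quotient is an integer. Consequently $(-1)^{e_3(\muvec)}=(-1)^n$ on $G$, and the first term is zero.

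\emph{Step 2: the collision bound.} A union bound over the $\binom{2n+1}{2}$ unordered pairs $\{t,t'\}$ gives
\[
 \mathbb P(G^c)\leq\binom{2n+1}{2}\frac1m=\frac{(2n+1)(2n)}{2m}.
\]
Multiplying by $2$ yields exactly the bound in \cref{lem:collision}. The signs $\sigmavec$ play no role in this step. The cases $n=0$ and $m<3$ require no separate treatment: $G^c$ is empty when $n=0$, and the bound is trivially true when it exceeds $2$.

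No step here is a real obstacle. The only point needing care is the parity identity $\binom{2n+1}{3}\equiv n\pmod 2$, which the divisibility argument in Step~1 handles directly.
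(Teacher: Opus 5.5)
Your proof is correct and is essentially the paper's argument: restrict to the collision-free event, where $e_3(\muvec)\equiv\binom{2n+1}{3}\equiv n\pmod 2$ by $3\binom{2n+1}{3}=n(4n^2-1)$, then use the union bound $\binom{2n+1}{2}/m$ on the collision probability times the maximal discrepancy $2$. The paper's separate handling of $n=0$ and $m\le 2n+1$ is only cosmetic; as you observe, the union bound holds unconditionally and the parity claim is vacuous when the collision-free event is empty.
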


\begin{proof}
If $n=0$, then $e_3(\muvec)=0$, so the claim is immediate. Suppose
$n\geq1$. If $m\leq2n+1$, then
\[
 |E_{m,n}-(-1)^n|\leq2
 \leq\frac{(2n+1)(2n)}m,
\]
so the claim is again immediate. We may therefore assume $m>2n+1$.
Choose $\tau_1,\ldots,\tau_{2n+1}$ independently and uniformly from
$[m]$. The exact probability of no collision is
\[
 \frac{m(m-1)\cdots(m-2n)}{m^{2n+1}}.
\]
For the proof, the simpler union bound is enough:
\[
 \Pr(\text{a collision})
 \leq\binom{2n+1}{2}\frac1m.
\]
On the collision-free event, the frequency vector has exactly $2n+1$
nonzero coordinates, all equal to $\pm1$. Modulo two, every nonzero
coordinate is $1$, and therefore
\[
 e_3(\muvec)
 \equiv\binom{2n+1}{3}\pmod2.
\]
The identity
\[
 3\binom{2n+1}{3}=n(4n^2-1)
\]
shows that $\binom{2n+1}{3}\equiv n\pmod2$. Thus
$(-1)^{e_3(\muvec)}=(-1)^n$ outside the collision event. On the
collision event the two signs differ by at most $2$, which gives
\eqref{eq:collision}.
\end{proof}

\begin{proposition}\label{prop:T-limit}
For every fixed $n\geq0$,
\begin{equation}\label{eq:T-limit}
 \lim_{m\to\infty}\lim_{d\to\infty}T_{d,m,n}=(-1)^n.
\end{equation}
\end{proposition}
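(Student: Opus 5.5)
Fix $n\geq0$ and $m\geq3$. First take $d\to\infty$ with $m$ fixed. By definition \eqref{eq:T-def} and \cref{prop:block-factorization},
\[
 T_{d,m,n}=\E_{\tauvec,\sigmavec}\Phi_{d,m}\bigl(\muvec(\tauvec,\sigmavec)\bigr).
\]
Since $\mu_s$ is a sum of at most $2n+1$ signs, every coordinate satisfies $|\mu_s|\leq L:=2n+1$, deterministically for all $(\tauvec,\sigmavec)$. Hence \cref{lem:global} applies uniformly: for $d\geq CL^3$,
\[
 \bigl|T_{d,m,n}-E_{m,n}\bigr|
 \leq\E_{\tauvec,\sigmavec}\bigl|\Phi_{d,m}(\muvec)-(-1)^{e_3(\muvec)}\bigr|
 \leq Cm^3(2n+1)^4d^{-1/3}.
\]
The right-hand side tends to $0$ as $d\to\infty$ with $m,n$ fixed. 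Thus the inner limit exists and equals $E_{m,n}$ defined in \eqref{eq:E-mn}. The expectation is over a finite probability space, so no convergence issue arises in exchanging it with the limit.

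Next let $m\to\infty$. \Cref{lem:collision} gives $|E_{m,n}-(-1)^n|\leq(2n+1)(2n)/m\to0$, so $\lim_{m\to\infty}\lim_{d\to\infty}T_{d,m,n}=\lim_{m\to\infty}E_{m,n}=(-1)^n$, which is \eqref{eq:T-limit}. The case $n=0$ needs no separate treatment, since both lemmas cover it.

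There is no real obstacle. The only point that needs care is that the bound $|\mu_s|\leq 2n+1$ is uniform over all outcomes, so the threshold $d\geq C(2n+1)^3$ in \cref{lem:global} does not depend on the randomness. Once that is checked, the two earlier lemmas combine directly.
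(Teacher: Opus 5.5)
Your proposal is correct and follows the paper's proof essentially verbatim. You use the deterministic bound $|\mu_s|\leq 2n+1$ with \cref{prop:block-factorization} and \cref{lem:global} to get $\lim_{d\to\infty}T_{d,m,n}=E_{m,n}$, and then let $m\to\infty$ via \cref{lem:collision}; your remark that the threshold $d\geq C(2n+1)^3$ is uniform over outcomes is exactly the point the paper relies on implicitly.
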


\begin{proof}
Fix $m,n$. Every frequency in \eqref{eq:mu} satisfies
$|\mu_s|\leq2n+1$. By \cref{prop:block-factorization,lem:global},
\[
 \lim_{d\to\infty}T_{d,m,n}=E_{m,n}.
\]
Now let $m\to\infty$ and apply \cref{lem:collision}.
\end{proof}

The estimates above also give a useful finite-parameter statement. If
$Q\in\Z_{\geq0}$ and $L:=2Q+1$, then, for every $d\geq CL^3$ and every
$n\in\{0,\ldots,Q\}$,
\[
 |T_{d,m,n}-(-1)^n|
 \leq |T_{d,m,n}-E_{m,n}|+|E_{m,n}-(-1)^n|
 \leq Cm^3L^4d^{-1/3}+\frac{2n(2n+1)}m.
\]
Since $1=\frac2\pi\sum_{n\geq0}c_n$, splitting
\eqref{eq:coefficient-formula} at $Q$ and using
$|T_{d,m,n}-(-1)^n|\leq2$ in the tail gives
\begin{equation}\label{eq:quantitative-final}
 |\BK(f_{d,m},g_{d,m})-1|
 \leq\frac2\pi\sum_{n=0}^Q c_n
 \left(Cm^3L^4d^{-1/3}+\frac{2n(2n+1)}m\right)
 +\frac4\pi\sum_{n>Q}c_n.
\end{equation}

\section{Proof of main results}\label{sec:main-results-proof}

Now we are ready to complete the proof of \cref{thm:main}.

\begin{proof}[Proof of \cref{thm:main}]
By \cref{prop:coefficient-formula,prop:block-factorization},
$|T_{d,m,n}|\leq1$. For fixed $m$, dominated convergence in the
absolutely summable series \eqref{eq:coefficient-formula} gives
\[
 \lim_{d\to\infty}\BK(f_{d,m},g_{d,m})
 =\frac2\pi\sum_{n=0}^\infty(-1)^nc_nE_{m,n}.
\]
A second application of dominated convergence, now using
\cref{lem:collision}, yields
\[
 \lim_{m\to\infty}\lim_{d\to\infty}\BK(f_{d,m},g_{d,m})
 =\frac2\pi\sum_{n=0}^\infty c_n=1.
\]
The upper bound, the fixed-dimensional gap, and non-attainment are
proved in Appendix~\ref{app:Fourier-gap}. The same order of limits can
also be read directly from \eqref{eq:quantitative-final}: fix $Q$, first
let $d\to\infty$, then let $m\to\infty$, and finally let $Q\to\infty$.
\end{proof}

For a single diagonal choice, take $Q=Q(m)\to\infty$ sufficiently
slowly, set $L(m):=2Q(m)+1$, and require
\[
 \frac{L(m)^2}{m}\longrightarrow0,\qquad
 m^3L(m)^4d^{-1/3}\longrightarrow0.
\]
For example, take
\[
 Q(m)=\lfloor m^{1/4}\rfloor,\qquad L(m)=2Q(m)+1,\qquad d(m)=m^{13}.
\]
Then $L(m)^2/m=O(m^{-1/2})$ and
$m^3L(m)^4d(m)^{-1/3}=O(m^{-1/3})$, so both errors vanish; moreover,
$d(m)\geq CL(m)^3$ for all sufficiently large $m$.

\appendix

\section{The Fourier bound and the fixed-dimensional gap}
\label{app:Fourier-gap}

The Plancherel upper bound is standard.  BMMN's weak-compactness observation
for planar maximizers \cite[(3.1) and the paragraph following it]{BMMN}
extends verbatim to every fixed dimension, but we have not found the strict
fixed-dimensional gap stated explicitly in the literature.  Qualitatively it
follows by combining attainment with the obstruction to equality in the
Fourier bound.  Here we give a direct quantitative proof, avoiding compactness
and producing the explicit gap $\eta_N>0$ used in \cref{thm:main}.

For real $h\in L_1(\R^N)\cap L_2(\R^N)$, define the
normalized sine transform
\[
 (\mathcal Sh)(\yvec):=\frac1{(2\pi)^{N/2}}
 \int_{\R^N}h(\xvec)\sin\ip{\xvec}{\yvec}\,\dd\xvec,
\]
and let $\Podd h(\xvec)=(h(\xvec)-h(-\xvec))/2$. The sine transform
annihilates the even part and is an $L_2$ isometry on the real odd
subspace. Set
\[
 h_f(\xvec):=\pi^{-N/4}f(\xvec)e^{-\norm{\xvec}^2/2},
 \qquad
 h_g(\yvec):=\pi^{-N/4}g(\yvec)e^{-\norm{\yvec}^2/2}.
\]
Then $\norm{h_f}_2=\norm{h_g}_2=1$, and the normalization in
\eqref{eq:BK-def} gives
\begin{equation}\label{eq:Fourier-representation}
 \BK(f,g)=\ip{\mathcal S\Podd h_f}{\Podd h_g}_{L_2}
          =\ip{\mathcal S h_f}{h_g}_{L_2}.
\end{equation}
Cauchy--Schwarz and the contractivity of the odd projection give
\[
 |\BK(f,g)|
 \leq\norm{\Podd h_f}_2\norm{\Podd h_g}_2
 \leq1.
\]

If $|\BK(f,g)|=1$, then both projection norms would equal $1$. Since
$\Podd$ is an orthogonal projection, the even parts of $h_f$ and $h_g$
would vanish; hence $f$ and $g$ would be odd almost everywhere. Equality
in Cauchy--Schwarz would then give
$\mathcal Sh_f=\omega h_g$ almost everywhere for some
$\omega\in\{\pm1\}$. Since $h_f\in L_1$, its sine transform has a
continuous representative $F$, and $F(0)=0$. Taking moduli in the
almost-everywhere identity gives
\[
 |F(\yvec)|=\pi^{-N/4}e^{-\norm{\yvec}^2/2}
\]
almost everywhere and hence everywhere by continuity. This is impossible
at the origin.

The same obstruction is uniform. Since $|f|=1$ and
$|\sin\ip{\xvec}{\yvec}|\leq\norm{\xvec}\norm{\yvec}$,
\[
 |(\mathcal Sh_f)(\yvec)|
 \leq\frac{\pi^{-N/4}}{(2\pi)^{N/2}}
 \int_{\R^N}e^{-\norm{\xvec}^2/2}\norm{\xvec}\,\dd\xvec
 \norm{\yvec}
 =L_N\norm{\yvec},
\]
where
\[
 L_N:=\pi^{-N/4}\sqrt2\,
 \frac{\Gamma((N+1)/2)}{\Gamma(N/2)}.
\]
Set
\[
 r_N:=\min\left\{1,
 \frac{\pi^{-N/4}e^{-1/2}}{2L_N}\right\}>0.
\]
Since $r_N\leq1$, this choice gives
\[
 L_Nr_N\leq\tfrac12\pi^{-N/4}e^{-1/2}
 \leq\tfrac12\pi^{-N/4}e^{-r_N^2/2}.
\]
If $\norm{\yvec}\leq r_N$, then, almost everywhere,
\[
 |\mathcal Sh_f(\yvec)|
 \leq L_Nr_N
 \leq\tfrac12\pi^{-N/4}e^{-r_N^2/2}
 \leq\tfrac12\pi^{-N/4}e^{-\norm{\yvec}^2/2}
 =\tfrac12|h_g(\yvec)|.
\]
Thus the reverse triangle inequality gives, for either sign $\omega$,
\[
 |\mathcal Sh_f(\yvec)-\omega h_g(\yvec)|
 \geq\tfrac12\pi^{-N/4}e^{-\norm{\yvec}^2/2}.
\]
Set
\[
 \delta_N:=\frac14\pi^{-N/2}
 \int_{\{\norm{\yvec}\leq r_N\}}e^{-\norm{\yvec}^2}\,\dd\yvec,
 \qquad
 \eta_N:=\frac{\delta_N}{2}>0.
\]
Then $\norm{\mathcal Sh_f-\omega h_g}_2^2\geq\delta_N$, uniformly in
$f,g$. Moreover,
\[
 \norm{\mathcal Sh_f}_2=\norm{\Podd h_f}_2
 \leq\norm{h_f}_2=1.
\]
Choosing $\omega$ so that
$\omega\BK(f,g)=|\BK(f,g)|$ and using \eqref{eq:Fourier-representation},
\[
 \delta_N\leq\norm{\mathcal Sh_f-\omega h_g}_2^2
 \leq2-2|\BK(f,g)|.
\]
Hence $|\BK(f,g)|\leq1-\eta_N$.

\section{The one-dimensional whole-space theorem}
\label{app:one-dimensional-whole-space}

The one-dimensional result was announced by K\"onig~\cite{KONIG} as
joint unpublished work with Tomczak-Jaegermann. BMMN later gave a
published proof, including the uniqueness of the extremizers
\cite[Theorem~1.4 and Section~6]{BMMN}. Their argument first obtains a
maximizer on the positive half-line and then uses quantitative
oscillatory estimates to show that both maximizing sign functions are
constant there. The proof below gives a direct argument based on
positivity of an associated half-line kernel.

\begin{theorem}[Sharp one-dimensional K\"onig theorem]
\label{thm:app-one-dimensional}
For all measurable $f,g:\R\to\{\pm1\}$,
\begin{equation}\label{eq:app-one-whole-space-bilinear}
 \abs{\BK(f,g)}\leq\cK.
\end{equation}
Equality in absolute value holds if and only if there are
$\epsilon,\delta\in\{\pm1\}$ such that
\begin{equation}\label{eq:app-one-scalar-extremizers}
 f(x)=\epsilon\sgn(x),
 \qquad
 g(x)=\delta\sgn(x)
\end{equation}
almost everywhere. For such a pair,
\[
 \BK(f,g)=\epsilon\delta\,\cK.
\]
Consequently, $\BK(f,g)\leq\cK$, with equality if and only if
$f=g=\sgn$ almost everywhere or
$f=g=-\sgn$ almost everywhere.
\end{theorem}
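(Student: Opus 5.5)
The plan is to reduce the estimate to a problem on the half-line, where one can exploit a kernel that is pointwise positive. Write $k(x,y):=e^{-(x^2+y^2)/2}\sin(xy)$. Since $k$ is odd in each variable, $\BK(f,g)$ depends only on the odd parts of $f$ and $g$. Set $F:=(\Podd f)|_{(0,\infty)}$ and $G:=(\Podd g)|_{(0,\infty)}$, which take values in $[-1,1]$. Then
\[
 \BK(f,g)=\frac{2\sqrt2}{\pi}\int_0^\infty\!\!\int_0^\infty F(x)G(y)\,k(x,y)\,\dd x\,\dd y .
\]
The right-hand side is bilinear, so its extremal values are attained at extreme points $F,G\in\{\pm1\}$. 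The claimed value $\cK$ is the case $F=G=1$, by \eqref{eq:halfspace-value-intro} with $N=1$. It therefore suffices to prove $\abs{\int\!\!\int FGk}\le\int\!\!\int k$ over $(0,\infty)^2$ for $\abs F,\abs G\le1$, and to characterize the cases of equality.

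Let $T$ be the operator on $(0,\infty)$ with kernel $k$. The kernel of $T$ itself changes sign, so no pointwise comparison is available directly. The key computation is that $T^2$ has the kernel
\[
 \int_0^\infty k(x,z)k(z,y)\,\dd z=\frac{\sqrt\pi}{4}\,e^{-(x^2+y^2)/2}\bigl(e^{-(x-y)^2/4}-e^{-(x+y)^2/4}\bigr),
\]
which I would verify from $2\sin(xz)\sin(zy)=\cos((x-y)z)-\cos((x+y)z)$ and the Gaussian cosine integral. This kernel is strictly positive on $(0,\infty)^2$, so $\abs{T^2F}\le T^2\mathbf 1$ pointwise whenever $\abs F\le1$, with strict inequality unless $F\equiv\pm1$ almost everywhere. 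I expect this to be the positivity of the associated half-line kernel that underlies the appendix.

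The hard step is transferring this positivity from $T^2$ back to the bilinear form $\ip{TF}{G}$. My first attempt would be a weighted Cauchy--Schwarz with the Perron-type weight $w:=T\mathbf 1$. This function equals $e^{-x^2/2}$ times a Dawson-type integral, and it is positive on $(0,\infty)$; I would confirm that from its explicit form. Writing $(TF)(y)=\int k(y,x)F(x)\,\dd x$, I would aim to show
\[
 \int_0^\infty\abs{TF}\,\dd y\le\int_0^\infty T\mathbf 1\,\dd y .
\]
Maximizing over $G$ gives exactly $\int\abs{TF}$, so this inequality is what is needed. To get it, I would apply Cauchy--Schwarz in the form $\bigl(\int\abs{TF}\bigr)^2\le\int w\cdot\int\abs{TF}^2/w$. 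I would then try to bound $\int\abs{TF}^2/w$ by $\int w$, using the positivity of $T^2$ together with the eigen-like relation between $T^2\mathbf 1$ and $w$.

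If this weighted scheme does not close, the fallback is the half-line sine estimate alluded to for Appendix~\ref{app:two-dimensional-quadrant}. In that route I would prove directly that $\Lambda(y):=\int_0^\infty e^{-x^2/2}\sin(xy)\,\dd x$ is positive and dominates the relevant tested integrals after symmetrization in $x\leftrightarrow y$. This transfer from the positive squared kernel to the linear form is where I expect the real difficulty to lie.

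For the equality cases, tracing the argument shows that equality forces $\abs{T^2F}=T^2\mathbf 1$ on a set of positive measure. Strict positivity of the $T^2$ kernel then forces $F\equiv\epsilon$ almost everywhere for some constant $\epsilon\in\{\pm1\}$, and hence $G\equiv\delta$ with $\delta\in\{\pm1\}$. The equality case also requires $\norm{\Podd h_f}_2=\norm{h_f}_2$, exactly as in Appendix~\ref{app:Fourier-gap}, so the even parts of $f$ and $g$ must vanish. Together these give $f=\epsilon\sgn$ and $g=\delta\sgn$ almost everywhere, with $\BK=\epsilon\delta\,\cK$. The final sentence of the theorem then follows by taking $\epsilon=\delta$.
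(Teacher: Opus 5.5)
Your reduction is sound and coincides with the paper's. Write $TF=e^{-y^2/2}S_+F$ with $S_+F(y)=\int_0^\infty F(x)e^{-x^2/2}\sin(xy)\,\dd x$. Then your weight is $w=T\mathbf 1=e^{-y^2}\varphi(y)$ with $\varphi(y)=\int_0^y e^{s^2/2}\,\dd s$, so $\abs{TF}^2/w=(S_+F)^2/\varphi$ and $\int_0^\infty w=\frac{\pi}{2\sqrt2}\cK$. Everything therefore hinges on
\[
 \int_0^\infty\frac{(S_+F)(y)^2}{\varphi(y)}\,\dd y\le\int_0^\infty w(y)\,\dd y\qquad(\abs F\le1).
\]
This is exactly the step you leave open, and positivity of $T^2$ does not give it. Expanded, the left side is a quadratic form in $F$ with kernel $e^{-(x^2+z^2)/2}K_+(x,z)$, where $K_+(x,z)=\int_0^\infty\sin(xy)\sin(zy)\varphi(y)^{-1}\,\dd y$. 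The kernel of $T^2$ is the same expression with $e^{-y^2}$ in place of $1/\varphi(y)$, and $1/\varphi(y)$ behaves like $1/y$ at $0$ and like $ye^{-y^2/2}$ at infinity. What the positive kernel of $T^2$ yields is $\norm{TF}_2\le\norm{T\mathbf 1}_2$, which is the inequality with the wrong weight. It cannot be substituted, because the Cauchy--Schwarz step is sharp at $F=\mathbf 1$ only with the weight $w$ itself. Nor is there a general principle that turns entrywise positivity of $T^2$ into extremality of constants. For example, $T=\bigl(\begin{smallmatrix}1&-1/10\\-1/10&-2\end{smallmatrix}\bigr)$ has $T^2$ entrywise positive, yet $\max_{F,G\in\{\pm1\}^2}\ip{TF}{G}=3>6/5=\abs{\ip{T\mathbf 1}{\mathbf 1}}$. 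Your fallback paragraph is too vague to supply the bound.

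The missing idea is a direct proof that $K_+>0$ on $(0,\infty)^2$. With $\alpha=\abs{x-z}$ and $\beta=x+z$, one has $2\sin(xy)\sin(zy)=\cos(\alpha y)-\cos(\beta y)=y\int_\alpha^\beta\sin(ty)\,\dd t$. Fubini then gives $2K_+(x,z)=\int_\alpha^\beta\int_0^\infty\chi(y)\sin(ty)\,\dd y\,\dd t$ with $\chi(y)=y/\varphi(y)$. This $\chi$ is integrable and strictly decreasing, since $ye^{y^2/2}-\varphi(y)=\int_0^y(e^{y^2/2}-e^{s^2/2})\,\dd s>0$. Pairing each positive half-wave of $\sin(ty)$ with the following negative one therefore makes the inner integral strictly positive. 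Pointwise positivity of $K_+$ gives the displayed bound, and its strictness forces $F\equiv\epsilon\in\{\pm1\}$ in the equality case. Your equality paragraph, phrased via $\abs{T^2F}=T^2\mathbf 1$, is not attached to any inequality in your argument and must be rebuilt on $K_+$. Once $F\equiv\epsilon$, one has $TF=\epsilon w$ with $w>0$, which forces $G\equiv\delta$. Finally, the vanishing of the even parts is not the Appendix~\ref{app:Fourier-gap} condition $\norm{\Podd h_f}_2=\norm{h_f}_2$, which concerns $\abs{\BK}=1$. It is immediate here: $(f(x)-f(-x))/2=\epsilon$ with $f$ Boolean forces $f(x)=\epsilon$ and $f(-x)=-\epsilon$.
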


We begin with two estimates for the half-line sine operator.

For $t>0$, put
\[
 \varphi(t):=\int_0^t e^{s^2/2}\,\dd s,
\]
and, for measurable $h:(0,\infty)\to\R$, define the unnormalized
half-line sine operator by
\[
 (S_+h)(y):=
 \int_0^\infty h(x)e^{-x^2/2}\sin(xy)\,\dd x,
 \qquad y>0.
\]
Let
\[
 \psi(y):=\int_0^\infty e^{-x^2/2}\sin(xy)\,\dd x.
\]
Differentiation under the integral and integration by parts give
$\psi'(y)+y\psi(y)=1$ and $\psi(0)=0$. Consequently,
\begin{equation}\label{eq:app-one-Gaussian-sine}
 \psi(y)=e^{-y^2/2}\varphi(y)>0.
\end{equation}
We shall use the normalization
\begin{equation}\label{eq:app-one-half-line-normalization}
 \int_0^\infty e^{-y^2/2}\psi(y)\,\dd y
 =
 \int_0^\infty e^{-y^2}\varphi(y)\,\dd y
 =\frac{\pi}{2\sqrt2}\cK.
\end{equation}
Its verification is given in \cref{lem:app-one-constant} below.

\begin{lemma}[Strict Dirichlet positivity]
\label{lem:app-one-Dirichlet}
Suppose that $\chi:(0,\infty)\to(0,\infty)$ is strictly decreasing and
integrable. Then, for every $u>0$,
\[
 \int_0^\infty\chi(t)\sin(ut)\,\dd t>0.
\]
\end{lemma}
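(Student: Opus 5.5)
We use the classical argument of splitting the half-line into half-periods of $\sin(ut)$ and pairing consecutive positive and negative lobes. Fix $u>0$ and write $t_k:=k\pi/u$ for $k\geq0$. Since $\chi$ is integrable and $|\sin|\leq1$, the integral converges absolutely, so it equals the absolutely convergent series
\[
 \int_0^\infty\chi(t)\sin(ut)\,\dd t=\sum_{k\geq0}I_k,
 \qquad
 I_k:=\int_{t_k}^{t_{k+1}}\chi(t)\sin(ut)\,\dd t.
\]
The sign of $\sin(ut)$ on $(t_k,t_{k+1})$ is $(-1)^k$, so $(-1)^kI_k\geq0$.

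The key step is to compare adjacent lobes by a translation. Substituting $t\mapsto t+\pi/u$ in $I_{2j+1}$ gives
\[
 I_{2j}+I_{2j+1}=\int_{t_{2j}}^{t_{2j+1}}\bigl(\chi(t)-\chi(t+\pi/u)\bigr)\sin(ut)\,\dd t .
\]
On this interval $\sin(ut)>0$ except at the endpoints. Because $\chi$ is strictly decreasing, $\chi(t)-\chi(t+\pi/u)>0$ for every $t$. Hence each paired integrand is strictly positive on an open interval, and so $I_{2j}+I_{2j+1}>0$ for every $j\geq0$.

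Summing over $j$ finishes the proof. Grouping the absolutely convergent series in consecutive pairs is legitimate, so
\[
 \int_0^\infty\chi(t)\sin(ut)\,\dd t=\sum_{j\geq0}\bigl(I_{2j}+I_{2j+1}\bigr)\geq I_0+I_1>0 .
\]

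The only point needing care is convergence and the regrouping. Measurability of $\chi$ is automatic since it is monotone, and integrability of $\chi$ gives absolute convergence, so no conditional-convergence issue arises. In particular, we never need $\chi(t)\to0$ as $t\to\infty$. Strictness is where the strict monotonicity hypothesis is used: a constant-on-intervals $\chi$ could make the first pair vanish.
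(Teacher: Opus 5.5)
Your proof is correct and follows essentially the same route as the paper: you pair each positive half-wave of $\sin(ut)$ with the following negative one, and strict monotonicity makes every paired integral strictly positive. Integrability of $\chi$ gives the absolute convergence that justifies the regrouping. The paper merely rescales by $s=ut$ before pairing on $[0,\pi]$, whereas you translate by $\pi/u$ directly in the $t$ variable.
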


\begin{proof}
After the change of variables $s=ut$, pair every positive half-wave of
the sine with the following negative half-wave:
\begin{align*}
 \int_0^\infty\chi(t)\sin(ut)\,\dd t
 &=
 \frac1u\sum_{k=0}^\infty\int_0^\pi
 \left[
 \chi\left(\frac{2k\pi+s}{u}\right)
 -
 \chi\left(\frac{(2k+1)\pi+s}{u}\right)
 \right]\sin s\,\dd s>0.
\end{align*}
The series is absolutely convergent because $\chi$ is integrable.
\end{proof}

\begin{lemma}[Sharp scalar half-line sine estimate]
\label{lem:app-one-half-line}
Let $h:(0,\infty)\to[-1,1]$ be measurable. Then
\begin{equation}\label{eq:app-one-quadratic-bound}
 \int_0^\infty
 \frac{(S_+h(y))^2}{\varphi(y)}\,\dd y
 \leq\frac{\pi}{2\sqrt2}\cK
\end{equation}
and
\begin{equation}\label{eq:app-one-half-line-bound}
 \int_0^\infty
 e^{-y^2/2}\abs{S_+h(y)}\,\dd y
 \leq\frac{\pi}{2\sqrt2}\cK.
\end{equation}
Equality holds in either inequality if and only if
$h=1$ almost everywhere or $h=-1$ almost everywhere.
\end{lemma}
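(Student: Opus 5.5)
The plan is to prove \eqref{eq:app-one-quadratic-bound} first and to derive \eqref{eq:app-one-half-line-bound} from it by Cauchy--Schwarz.

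\textbf{The quadratic bound.} Write $S_+h(y)=\int_0^\infty h(x)K(x,y)\,\dd x$ with $K(x,y):=e^{-x^2/2}\sin(xy)$. Expanding the square gives
\[
 \int_0^\infty\frac{(S_+h(y))^2}{\varphi(y)}\,\dd y
 =\iint_{(0,\infty)^2}h(x)h(x')\,\mathcal G(x,x')\,\dd x\,\dd x',
\]
where
\[
 \mathcal G(x,x'):=e^{-(x^2+x'^2)/2}\int_0^\infty\frac{\sin(xy)\sin(x'y)}{\varphi(y)}\,\dd y .
\]
The inner integral converges because $\varphi(y)\sim y$ at $0$ and $\varphi$ grows like $e^{y^2/2}/y$ at infinity. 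The key claim is that $\mathcal G>0$ pointwise. For this we write
\[
 2\sin(xy)\sin(x'y)=\cos((x-x')y)-\cos((x+x')y)=2\int_{|x-x'|/2}^{(x+x')/2}\ldots
\]
or, more simply, use that $1/\varphi$ is positive and strictly decreasing. Then Lemma~\ref{lem:app-one-Dirichlet}, applied after an integration-by-parts type rearrangement, gives $\int_0^\infty\bigl(\cos(ay)-\cos(by)\bigr)/\varphi(y)\,\dd y>0$ for $0\le a<b$; this is exactly the positivity needed. Concretely, $\cos(ay)-\cos(by)=\int_a^b y\sin(ty)\,\dd t$, so the integral equals $\int_a^b\int_0^\infty \bigl(y/\varphi(y)\bigr)\sin(ty)\,\dd y\,\dd t$. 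Since $y/\varphi(y)=y\bigl/\int_0^y e^{s^2/2}\,\dd s$ is strictly decreasing (the average of the increasing function $e^{s^2/2}$ increases) and integrable, Lemma~\ref{lem:app-one-Dirichlet} makes the inner integral positive. With $\mathcal G>0$ and $|h|\le1$ we get
\[
 \iint h h'\,\mathcal G\le\iint\mathcal G,
\]
which is the value of the left side at $h=1$. By \eqref{eq:app-one-Gaussian-sine}, $S_+1=\psi$, so the left side at $h=1$ is $\int_0^\infty\psi^2/\varphi=\int_0^\infty e^{-y^2/2}\psi$. This equals $\frac{\pi}{2\sqrt2}\cK$ by \eqref{eq:app-one-half-line-normalization}. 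Equality forces $h(x)h(x')=1$ for a.e.\ $(x,x')$, because $\mathcal G>0$, and hence $h\equiv\pm1$.

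\textbf{The $L_1$ bound.} Cauchy--Schwarz with weights gives
\[
 \int_0^\infty e^{-y^2/2}|S_+h|\le\Bigl(\int_0^\infty\frac{(S_+h)^2}{\varphi}\Bigr)^{1/2}\Bigl(\int_0^\infty e^{-y^2}\varphi\Bigr)^{1/2},
\]
and both factors are at most $\bigl(\frac{\pi}{2\sqrt2}\cK\bigr)^{1/2}$. Equality in the product requires equality in the first factor, so $h=\pm1$; conversely, $h=\pm1$ gives $|S_+h|=\psi$ and equality. Measure-theoretic care is needed to apply Fubini in the expansion, but absolute integrability follows from the bound $\int_0^\infty|S_+h(y)|^2/\varphi(y)\,\dd y<\infty$, using $|S_+h(y)|\le C\min(y,1)$ together with the decay of $1/\varphi$.

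\textbf{Main obstacle.} I expect the hardest step to be the pointwise positivity of $\mathcal G$, in particular justifying the interchange in $\int_a^b\int_0^\infty$. The function $y/\varphi(y)$ is integrable and decays rapidly, so Fubini is fine there. The monotonicity of $y/\varphi$ also needs checking near $0$: it tends to $1$ there and is strictly decreasing, so this works.
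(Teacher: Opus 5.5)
Your proposal is correct and follows the paper's proof essentially step for step. The shared steps are: positivity of the kernel via $\cos(\alpha y)-\cos(\beta y)=y\int_\alpha^\beta\sin(ty)\,\dd t$, using that $y/\varphi(y)$ is strictly decreasing and integrable, together with the Dirichlet lemma; comparison with $h\equiv1$ using $S_+1=\psi$; and the weighted Cauchy--Schwarz step with the factor $\int_0^\infty e^{-y^2}\varphi=\int_0^\infty e^{-y^2/2}\psi$. One small imprecision: Fubini in the quadratic expansion should be justified by the majorant $\int_0^\infty e^{-x^2/2}\abs{\sin(xy)}\,\dd x\leq\min(y,\sqrt{\pi/2})$, not by finiteness of $\int_0^\infty(S_+h)^2/\varphi$ itself; this majorant is exactly what the paper uses.
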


\begin{proof}
For $x,z>0$, let
\[
 K_+(x,z):=
 \int_0^\infty
 \frac{\sin(xy)\sin(zy)}{\varphi(y)}\,\dd y.
\]
We first prove that $K_+(x,z)>0$. Put
\[
 \alpha:=\abs{x-z},\qquad \beta:=x+z.
\]
Since $\varphi'(y)=e^{y^2/2}$, the derivative of
$\chi(y):=y/\varphi(y)$ is
\[
 \chi'(y)
 =\frac{\varphi(y)-ye^{y^2/2}}{\varphi(y)^2}<0.
\]
Indeed,
\[
 ye^{y^2/2}-\varphi(y)
 =
 \int_0^y\left(e^{y^2/2}-e^{t^2/2}\right)\,\dd t>0.
\]
Continuity of the integrand defining $\varphi$ gives
$\varphi(y)/y\to1$ as $y\downarrow0$. At infinity, l'H\^opital's rule
gives
\[
 \lim_{y\to\infty}
 \frac{\varphi(y)}{e^{y^2/2}/y}
 =\lim_{y\to\infty}
 \frac{e^{y^2/2}}{e^{y^2/2}(1-y^{-2})}=1.
\]
Hence $\chi(y)\to1$ as $y\downarrow0$ and
$\chi(y)\sim y^2e^{-y^2/2}$ as $y\to\infty$, proving that
$\chi\in L_1(0,\infty)$. The product-to-sum identity and
\[
 \cos(\alpha y)-\cos(\beta y)
 =y\int_\alpha^\beta\sin(ty)\,\dd t
\]
therefore give
\begin{align*}
 2K_+(x,z)
 &=\int_0^\infty
 \frac{\cos((x-z)y)-\cos((x+z)y)}{\varphi(y)}\,\dd y\\
 &=\int_\alpha^\beta\left(
   \int_0^\infty\chi(y)\sin(ty)\,\dd y
 \right)\dd t.
\end{align*}
Here Fubini's theorem applies because
\[
 \int_\alpha^\beta\int_0^\infty
 \chi(y)\abs{\sin(ty)}\,\dd y\,\dd t
 \leq(\beta-\alpha)\int_0^\infty\chi(y)\,\dd y<\infty.
\]
By \cref{lem:app-one-Dirichlet}, the inner integral is strictly positive
for every $t>0$. Since $\beta>\alpha\geq0$, we obtain
\begin{equation}\label{eq:app-one-kernel-positive}
 K_+(x,z)>0,\qquad x,z>0.
\end{equation}

The quadratic expansion below is justified by
\[
 \int_0^\infty\frac1{\varphi(y)}
 \left(
   \int_0^\infty e^{-x^2/2}\abs{\sin(xy)}\,\dd x
 \right)^2\dd y<\infty.
\]
To verify this, dominated convergence, using
$\abs{\sin(xy)}/y\leq x$, gives
\[
 \lim_{y\downarrow0}\frac1y
 \int_0^\infty e^{-x^2/2}\abs{\sin(xy)}\,\dd x
 =\int_0^\infty xe^{-x^2/2}\,\dd x=1.
\]
Together with $\varphi(y)\sim y$, this shows that the integrand in the
preceding display is asymptotic to $y$ at the origin. On the other hand,
\[
 \int_0^\infty e^{-x^2/2}\abs{\sin(xy)}\,\dd x
 \leq\int_0^\infty e^{-x^2/2}\,\dd x=\sqrt{\frac\pi2},
\]
and $1/\varphi(y)\sim ye^{-y^2/2}$ at infinity. Thus the same integrand
is $O(ye^{-y^2/2})$ there, proving the claimed integrability. Fubini's
theorem gives
\begin{align*}
 \int_0^\infty\frac{(S_+h(y))^2}{\varphi(y)}\,\dd y
 &=
 \int_0^\infty\int_0^\infty
 e^{-(x^2+z^2)/2}K_+(x,z)
 h(x)h(z)\,\dd x\,\dd z\\
 &\leq
 \int_0^\infty\int_0^\infty
 e^{-(x^2+z^2)/2}K_+(x,z)\,\dd x\,\dd z\\
 &=
 \int_0^\infty
 \frac{\psi(y)^2}{\varphi(y)}\,\dd y
 =\frac{\pi}{2\sqrt2}\cK.
\end{align*}
This proves \eqref{eq:app-one-quadratic-bound}.

If equality holds, strict positivity in
\eqref{eq:app-one-kernel-positive} forces
\[
 h(x)h(z)=1
\]
for almost every $(x,z)\in(0,\infty)^2$. Hence
$h=1$ almost everywhere or $h=-1$ almost everywhere.

For \eqref{eq:app-one-half-line-bound}, weighted Cauchy--Schwarz gives
\begin{align*}
 \int_0^\infty e^{-y^2/2}\abs{S_+h(y)}\,\dd y
 &\leq
 \left(
   \int_0^\infty e^{-y^2/2}\psi(y)\,\dd y
 \right)^{1/2}
 \left(
   \int_0^\infty
   \frac{e^{-y^2/2}(S_+h(y))^2}{\psi(y)}
   \,\dd y
 \right)^{1/2}\\
 &=\left(\frac{\pi}{2\sqrt2}\cK\right)^{1/2}
 \left(
   \int_0^\infty
   \frac{(S_+h(y))^2}{\varphi(y)}\,\dd y
 \right)^{1/2}
 \leq\frac{\pi}{2\sqrt2}\cK.
\end{align*}
Equality forces equality in \eqref{eq:app-one-quadratic-bound}.
Conversely, the definition of $\psi$ shows that both constant
inputs $h=1$ and $h=-1$ attain equality.
\end{proof}

\begin{lemma}[Evaluation of the half-line integral]
\label{lem:app-one-constant}
The normalization identity
\eqref{eq:app-one-half-line-normalization} holds.
\end{lemma}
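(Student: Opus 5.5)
We need to prove
\[
 \int_0^\infty e^{-y^2}\varphi(y)\,\dd y=\frac{\pi}{2\sqrt2}\cK=\frac{1}{\sqrt2}\log(1+\sqrt2),
\]
where $\varphi(y)=\int_0^y e^{s^2/2}\,\dd s$. The first equality in \eqref{eq:app-one-half-line-normalization} is immediate from \eqref{eq:app-one-Gaussian-sine}, since $e^{-y^2/2}\psi(y)=e^{-y^2}\varphi(y)$. So only the numerical value of the double integral remains.

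The plan is to substitute $s=ty$ with $t\in[0,1]$, so that $\varphi(y)=y\int_0^1 e^{t^2y^2/2}\,\dd t$. Then Tonelli's theorem applies, since all integrands are positive and $e^{-y^2(1-t^2/2)}$ is integrable for $t\le1$. This gives
\[
 \int_0^\infty e^{-y^2}\varphi(y)\,\dd y
 =\int_0^1\int_0^\infty y\,e^{-y^2(1-t^2/2)}\,\dd y\,\dd t
 =\int_0^1\frac{\dd t}{2(1-t^2/2)}
 =\int_0^1\frac{\dd t}{2-t^2}.
\]
The last integral is elementary:
\[
 \int_0^1\frac{\dd t}{2-t^2}
 =\frac1{2\sqrt2}\log\frac{\sqrt2+t}{\sqrt2-t}\Big|_0^1
 =\frac1{2\sqrt2}\log\frac{\sqrt2+1}{\sqrt2-1}.
\]
Since $(\sqrt2+1)/(\sqrt2-1)=(1+\sqrt2)^2$, this equals $\frac1{\sqrt2}\log(1+\sqrt2)$.

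Finally, we compare with \eqref{eq:cK}: $\frac{\pi}{2\sqrt2}\cK=\frac{\pi}{2\sqrt2}\cdot\frac2\pi\log(1+\sqrt2)=\frac1{\sqrt2}\log(1+\sqrt2)$, which matches.

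There is no real obstacle here. The only point needing care is justifying the interchange of integrals, and Tonelli's theorem covers it because the integrand is nonnegative and the resulting iterated integral is finite.
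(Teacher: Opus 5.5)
Your computation is correct, but it takes a different route from the paper.

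The paper evaluates the oscillatory form first. It writes $\int_0^\infty e^{-y^2/2}\psi(y)\,\dd y$ as the quadrant integral $\int_0^\infty\int_0^\infty e^{-(x^2+y^2)/2}\sin(xy)\,\dd x\,\dd y$ and passes to polar coordinates with $s=r^2/2$, so that $xy=s\sin2\theta$ and $\dd x\,\dd y=\dd s\,\dd\theta$. It then uses the Laplace transform $\int_0^\infty e^{-s}\sin(as)\,\dd s=a/(1+a^2)$ and substitutes $u=\cos2\theta$ to reach $\frac12\int_{-1}^1\dd u/(2-u^2)$. Only afterwards does it invoke $\psi=e^{-y^2/2}\varphi$ to transfer the value to the $\varphi$-form.

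You go the other way. The scaling $s=ty$ turns the $\varphi$-form into a double integral of a nonnegative function, so Tonelli applies without any convergence bookkeeping, and the inner integral is an elementary Gaussian moment. You land on the same one-variable integral $\int_0^1\dd t/(2-t^2)$, which is the paper's $\frac12\int_{-1}^1$ by evenness. You then use $\psi=e^{-y^2/2}\varphi$ for the other equality, so both proofs use that ODE-derived identity exactly once.

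Your route is slightly shorter and avoids the sign-changing integrand and the sine Laplace transform. The paper's route has a small conceptual advantage: it computes the sine double integral itself. Multiplied by $4/(\sqrt2\pi)$, that integral is a direct check of the one-dimensional half-space value $\BK(\sgn,\sgn)=\cK$ in \eqref{eq:halfspace-value-intro}.
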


\begin{proof}
Using polar coordinates
$x=r\cos\theta$, $y=r\sin\theta$, followed by $s=r^2/2$, gives
\begin{align*}
 \int_0^\infty e^{-y^2/2}\psi(y)\,\dd y
 &=
 \int_0^\infty\int_0^\infty
 e^{-(x^2+y^2)/2}\sin(xy)\,\dd x\,\dd y\\
 &=
 \int_0^{\pi/2}\int_0^\infty
 e^{-s}\sin\bigl(s\sin(2\theta)\bigr)\,\dd s\,\dd\theta\\
 &=
 \int_0^{\pi/2}
 \frac{\sin(2\theta)}{1+\sin^2(2\theta)}\,\dd\theta
 =
 \frac12\int_{-1}^1\frac{\dd u}{2-u^2}\\
 &=
 \frac1{\sqrt2}\log(1+\sqrt2)
 =\frac{\pi}{2\sqrt2}\cK.
\end{align*}
The second integral in \eqref{eq:app-one-half-line-normalization}
equals the first by \eqref{eq:app-one-Gaussian-sine}.
\end{proof}

\begin{proof}[Proof of \cref{thm:app-one-dimensional}]
Since the sine kernel is odd in each variable, folding both integrals
onto $(0,\infty)$ gives
\[
 \BK(f,g)
 =
 \frac4{\sqrt{2}\pi}
 \int_0^\infty e^{-y^2/2}
 \frac{g(y)-g(-y)}2
 S_+\!\left(\frac{f(\,\cdot\,)-f(-\,\cdot\,)}2\right)(y)\,\dd y.
\]
Both difference quotients take values in $[-1,1]$. Hence
\cref{lem:app-one-half-line} gives
\begin{align*}
 \abs{\BK(f,g)}
 &\leq
 \frac4{\sqrt{2}\pi}
 \int_0^\infty e^{-y^2/2}
 \abs{S_+\!\left(\frac{f(\,\cdot\,)-f(-\,\cdot\,)}2\right)(y)}
 \,\dd y\\
 &\leq
 \frac4{\sqrt{2}\pi}\frac{\pi}{2\sqrt2}\cK
 =
 \cK.
\end{align*}

Suppose equality holds in absolute value. Equality in
\eqref{eq:app-one-half-line-bound} implies that
$\bigl(f(x)-f(-x)\bigr)/2=\epsilon$ for almost every $x>0$ and some
$\epsilon\in\{\pm1\}$. The sine transform in the display above is then
$\epsilon\psi$. Since $\psi>0$, equality in the first inequality forces
$\bigl(g(y)-g(-y)\bigr)/2=\delta$ for almost every $y>0$, for some fixed
$\delta\in\{\pm1\}$.

Because $f$ and $g$ are Boolean, the identities
\[
 \frac{f(x)-f(-x)}2=\epsilon,
 \qquad
 \frac{g(x)-g(-x)}2=\delta
\]
force
$f(x)=\epsilon\sgn(x)$ and
$g(x)=\delta\sgn(x)$ almost everywhere. Conversely, every such pair
attains equality, with sign $\epsilon\delta$.
\end{proof}

\section{The two-dimensional quadrant theorem}
\label{app:two-dimensional-quadrant}

The following result is the two-dimensional counterpart of the
half-line estimate in \cref{app:one-dimensional-whole-space}. Its proof
combines the sharp half-line sine bound with an elementary cosine
estimate and the tensor decomposition of the quadrant sine operator.

Put
\[
 \mathsf Q_2:=[0,\infty)^2.
\]
For a measurable function $h:\mathsf Q_2\to[-1,1]$, define the
unnormalized quadrant sine operator
\[
 (S_{\mathsf Q_2}h)(\yvec)
 :=
 \int_{\mathsf Q_2}
 h(\xvec)e^{-\norm{\xvec}^2/2}\sin\ip{\xvec}{\yvec}\,\dd\xvec,
 \qquad \yvec\in\mathsf Q_2.
\]

\begin{theorem}[Sharp scalar quadrant bound]
\label{thm:app-quadrant}
For every measurable $h:\mathsf Q_2\to[-1,1]$,
\begin{equation}\label{eq:app-quadrant-main-inequality}
 \int_{\mathsf Q_2}e^{-\norm{\yvec}^2/2}\abs{(S_{\mathsf Q_2}h)(\yvec)}\,\dd\yvec\leq\frac{\pi^2}{4}\cK.
\end{equation}
Equality is attained by the constant inputs $h=1$ and $h=-1$.
\end{theorem}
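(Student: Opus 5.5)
The plan is to split the two-dimensional sine kernel into tensor products and then bound each piece by two Cauchy--Schwarz steps. The first step controls the cosine variable through Plancherel; the second controls the sine variable through the sharp quadratic half-line bound \eqref{eq:app-one-quadratic-bound} of \cref{lem:app-one-half-line}.

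\textbf{Decomposition.} Write
\[
\sin\ip{\xvec}{\yvec}=\sin(x_1y_1)\cos(x_2y_2)+\cos(x_1y_1)\sin(x_2y_2).
\]
This gives $S_{\mathsf Q_2}h=\mathcal A h+\mathcal B h$. Here $\mathcal A$ applies the half-line sine transform in $x_1\to y_1$ and the half-line cosine transform in $x_2\to y_2$, each with Gaussian weight, and $\mathcal B$ does the same with the roles of the coordinates exchanged. By the triangle inequality and the symmetry $(x_1,x_2)\leftrightarrow(x_2,x_1)$, which maps $h$ to another admissible input, it suffices to prove
\[
\int_{\mathsf Q_2}e^{-\norm{\yvec}^2/2}\abs{\mathcal A h(\yvec)}\,\dd\yvec\leq\frac{\pi^2}{8}\cK.
\]
For constant $h=\pm1$ one has $\mathcal Ah=\pm\psi(y_1)\sqrt{\pi/2}\,e^{-y_2^2/2}$, and likewise for $\mathcal Bh$. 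Both pieces then have the same constant sign, so the triangle inequality is an equality. A direct computation using \cref{lem:app-one-constant} gives each piece the value $\frac{\pi}{2\sqrt2}\cK\cdot\frac{\pi}{2\sqrt2}=\frac{\pi^2}{8}\cK$, which settles the equality claim.

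\textbf{Cosine step.} Put $H(x_2,y_1):=S_+\bigl(h(\cdot,x_2)\bigr)(y_1)$. Then $\mathcal Ah(y_1,\cdot)$ is the half-line cosine transform of $e^{-x_2^2/2}H(x_2,y_1)$. For fixed $y_1$, apply Cauchy--Schwarz in $y_2$ against $e^{-y_2^2/2}$, using $\int_0^\infty e^{-y^2}\,\dd y=\sqrt\pi/2$. Then use the half-line cosine Plancherel identity $\int_0^\infty\abs{\int_0^\infty k(x)\cos(xy)\,\dd x}^2\dd y=\frac\pi2\int_0^\infty k^2$, obtained by even extension. This yields
\[
\int_0^\infty e^{-y_2^2/2}\abs{\mathcal Ah(y_1,y_2)}\,\dd y_2
\leq\Bigl(\frac{\sqrt\pi}{2}\cdot\frac\pi2\Bigr)^{1/2}
\Bigl(\int_0^\infty e^{-x_2^2}H(x_2,y_1)^2\,\dd x_2\Bigr)^{1/2}.
\]
The constant here is sharp, since for constant inputs $\sqrt{\pi/2}\,e^{-y_2^2/2}$ is proportional to the weight.

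\textbf{Sine step.} Integrate the last display against $e^{-y_1^2/2}$ and use the weighted Cauchy--Schwarz inequality from the proof of \cref{lem:app-one-half-line}, with weight $\psi=e^{-y^2/2}\varphi$. This bounds $\int_0^\infty e^{-y_1^2/2}\bigl(\int_0^\infty e^{-x_2^2}H^2\,\dd x_2\bigr)^{1/2}\dd y_1$ by
\[
\Bigl(\frac{\pi}{2\sqrt2}\cK\Bigr)^{1/2}\Bigl(\int_0^\infty e^{-x_2^2}\int_0^\infty\frac{H(x_2,y_1)^2}{\varphi(y_1)}\,\dd y_1\,\dd x_2\Bigr)^{1/2},
\]
after a Tonelli swap. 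For each fixed $x_2$, \eqref{eq:app-one-quadratic-bound} applied to $h(\cdot,x_2)$ bounds the inner integral by $\frac{\pi}{2\sqrt2}\cK$. The remaining $x_2$-integral is $\sqrt\pi/2$. Multiplying the constants gives
\[
\sqrt{\frac{\pi^{3/2}}{4}}\cdot\frac{\pi}{2\sqrt2}\cK\cdot\frac{\pi^{1/4}}{\sqrt2}
=\frac{\pi}{2\sqrt2}\cdot\frac{\pi}{2\sqrt2}\cK=\frac{\pi^2}{8}\cK,
\]
as required.

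The main obstacle is this ordering of Cauchy--Schwarz steps. A naive bound that moves absolute values inside the cosine integral loses, because $\int_0^\infty\int_0^\infty e^{-(x^2+y^2)/2}\abs{\cos(xy)}\,\dd x\,\dd y$ exceeds the sharp constant $\frac{\pi}{2\sqrt2}$. The argument must therefore pass through $L_2$ quantities in $x_2$ before invoking the quadratic (not the $L_1$) half-line estimate. The only technical points left are measurability of $H$ and justification of Fubini/Tonelli, which follow from the integrability estimates already recorded in the proof of \cref{lem:app-one-half-line}.
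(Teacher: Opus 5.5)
Your proof is correct and uses the same ingredients as the paper's: the tensor splitting $S_{\mathsf Q_2}=S_+\otimes C_++C_+\otimes S_+$, the half-line cosine Plancherel identity, the weighted Cauchy--Schwarz step with weight $\psi$, and the quadratic sine bound \eqref{eq:app-one-quadratic-bound}, which together give $\frac{\pi^2}{8}\cK$ for each mixed term. The only difference is packaging: the paper dualizes against $k=\sgn(S_{\mathsf Q_2}h)$, places the sine variable in Hilbert-space vectors $\mathsf U(x_2),\mathsf V(y_2)\in L_2(0,\infty)$, and applies the Hilbert-valued cosine estimate of \cref{rem:app-quadrant-Hilbert-extension}, whereas you run the two Cauchy--Schwarz steps in the opposite order (scalar Plancherel at fixed $y_1$, then the quadratic sine bound at fixed $x_2$ after a Tonelli swap), which avoids both the duality and the vector-valued extension.
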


We begin with the auxiliary half-line cosine estimate.

In addition to $S_+$ from
\cref{app:one-dimensional-whole-space}, define the unnormalized
half-line cosine operator
\[
 (C_+u)(y):=
 \int_0^\infty u(x)e^{-x^2/2}\cos(xy)\,\dd x.
\]
Then
\begin{equation}\label{eq:app-quadrant-basic-transforms}
 \psi(y)=e^{-y^2/2}\varphi(y),
 \qquad
 (C_+\mathbf 1)(y)=\sqrt{\frac\pi2}e^{-y^2/2}.
\end{equation}
The full-line complex combination $C+\iu S$ is, after the usual
Gaussian rescaling and weight, the complex-time Hermite semigroup at
$z=\iu$; its even and odd restrictions give $C$ and $S$, respectively.
We do not use this interpretation below.
\begin{lemma}[Sharp scalar half-line cosine estimate]
\label{lem:app-quadrant-cosine}
For every bounded measurable $u:(0,\infty)\to\R$,
\begin{equation}\label{eq:app-quadrant-cosine-L2}
 \int_0^\infty\abs{C_+u(y)}^2\,\dd y
 =
 \frac\pi2\int_0^\infty e^{-x^2}\abs{u(x)}^2\,\dd x.
\end{equation}
If, in addition, $\abs{u}\leq1$, then
\begin{equation}\label{eq:app-quadrant-cosine-L1}
 \int_0^\infty e^{-y^2/2}\abs{C_+u(y)}\,\dd y
 \leq\frac{\pi}{2\sqrt2}.
\end{equation}
Equality is attained by $u=1$ and $u=-1$.
\end{lemma}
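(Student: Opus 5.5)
The plan is to extend $u$ evenly to the whole line and use Plancherel for the cosine transform. Put $U(x):=u(\abs{x})$ for $x\in\R$. Since $\cos$ is even,
\[
 (C_+u)(y)=\frac12\int_\R U(x)e^{-x^2/2}\cos(xy)\,\dd x
 =\frac12\int_\R U(x)e^{-x^2/2}e^{\iu xy}\,\dd x ,
\]
where the sine part drops out because $U$ is even. With the unitary normalization $\widehat F(y)=(2\pi)^{-1/2}\int F(x)e^{\iu xy}\,\dd x$, this reads $C_+u=\tfrac12\sqrt{2\pi}\,\widehat{F}$ with $F(x)=U(x)e^{-x^2/2}\in L_1\cap L_2$.

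Plancherel then gives
\[
 \int_\R\abs{C_+u}^2=\frac{\pi}{2}\int_\R\abs{F}^2=\pi\int_0^\infty e^{-x^2}\abs{u}^2 .
\]
Since $C_+u$ is even in $y$, halving the left-hand side yields the identity \eqref{eq:app-quadrant-cosine-L2}. This step is routine; the only check needed is that $F\in L_1\cap L_2$, which holds because $u$ is bounded.

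For \eqref{eq:app-quadrant-cosine-L1}, I would apply plain Cauchy--Schwarz against the weight $e^{-y^2/2}$:
\[
 \int_0^\infty e^{-y^2/2}\abs{C_+u}\,\dd y\le\Bigl(\int_0^\infty e^{-y^2}\,\dd y\Bigr)^{1/2}\Bigl(\int_0^\infty\abs{C_+u}^2\,\dd y\Bigr)^{1/2}.
\]
The first factor is $(\sqrt\pi/2)^{1/2}$. If $\abs{u}\le1$, the identity just proved bounds the second factor by $\bigl(\tfrac\pi2\cdot\tfrac{\sqrt\pi}{2}\bigr)^{1/2}$. The product is
\[
 \Bigl(\frac{\sqrt\pi}{2}\cdot\frac{\pi}{2}\cdot\frac{\sqrt\pi}{2}\Bigr)^{1/2}=\Bigl(\frac{\pi^2}{8}\Bigr)^{1/2}=\frac{\pi}{2\sqrt2},
\]
which is the claimed bound.

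For the equality cases I would use \eqref{eq:app-quadrant-basic-transforms}: for $u=1$ we have $C_+\mathbf 1=\sqrt{\pi/2}\,e^{-y^2/2}$, so
\[
 \int_0^\infty e^{-y^2/2}\abs{C_+\mathbf 1}\,\dd y=\sqrt{\frac\pi2}\cdot\frac{\sqrt\pi}{2}=\frac{\pi}{2\sqrt2},
\]
and $u=-1$ follows by linearity. Equivalently, $C_+\mathbf 1$ is proportional to the weight, so Cauchy--Schwarz is tight, and $\abs{u}=1$ makes the $L_2$ bound tight. I expect no real obstacle here; the only place needing care is the constant bookkeeping between the half-line and full-line normalizations, in particular the factor $\tfrac12$ from the even extension and the factor $2$ from the symmetry of the squared integral.
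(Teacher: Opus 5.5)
Your proposal is correct and is essentially the paper's own proof. The paper also extends $u$ evenly and identifies the full-line Fourier transform of $u(\lvert\cdot\rvert)e^{-(\cdot)^2/2}$ with $2C_+u$ on $(0,\infty)$. It then applies Plancherel (in the non-unitary convention, which gives its factors $8$ and $4\pi$; these agree with your $\tfrac12\sqrt{2\pi}$ bookkeeping), and finishes with the same weighted Cauchy--Schwarz step and the explicit formula for $C_+\mathbf 1$ for the equality cases.
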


\begin{proof}
Let $u_e(t):=u(\abs t)$. With the Fourier convention
$\widehat h(y)=\int_\R h(x)e^{\iu xy}\,\dd x$,
the Fourier transform below is even and equals $2C_+u$ on
$(0,\infty)$. Hence Plancherel gives
\[
 8\int_0^\infty\abs{C_+u(y)}^2\,\dd y
 =
 \int_\R\abs{\widehat{u_e(\,\cdot\,)e^{-(\,\cdot\,)^2/2}}(y)}^2\,\dd y
 =
 4\pi\int_0^\infty e^{-x^2}\abs{u(x)}^2\,\dd x,
\]
which is \eqref{eq:app-quadrant-cosine-L2}.
When $\abs{u}\leq1$, Cauchy--Schwarz yields
\begin{align*}
 \int_0^\infty e^{-y^2/2}\abs{C_+u(y)}\,\dd y
 &\leq
 \left(\int_0^\infty e^{-y^2}\,\dd y\right)^{1/2}
 \left(\int_0^\infty\abs{C_+u(y)}^2\,\dd y\right)^{1/2}\\
 &\leq
 \left(\frac{\sqrt\pi}{2}\right)^{1/2}
 \left(\frac\pi2\frac{\sqrt\pi}{2}\right)^{1/2}
 =\frac{\pi}{2\sqrt2}.
\end{align*}
For $u=\pm1$,
$C_+u(y)=\pm\sqrt{\pi/2}\,e^{-y^2/2}$, so equality holds.
\end{proof}

\begin{remark}[Hilbert-valued extension]
\label{rem:app-quadrant-Hilbert-extension}
The two half-line sine estimates in
\eqref{eq:app-one-quadratic-bound} and
\eqref{eq:app-one-half-line-bound} extend to functions with values in
a real Hilbert space and pointwise norm at most $1$. Indeed, the
quadratic estimate uses only
\[
 \ip{h(x)}{h(z)}\leq1.
\]
The $L_1$ estimate then follows by the same Cauchy--Schwarz argument.
The proof of \cref{lem:app-quadrant-cosine} likewise applies to
Hilbert-valued functions, using Hilbert-valued Plancherel and
Cauchy--Schwarz. Consequently, the proof below also yields the quadrant
bound for Hilbert-valued inputs, with the same constant and with
equality for a constant function in a fixed unit direction.
\end{remark}

\begin{proof}[Proof of \cref{thm:app-quadrant}]
The identity
\[
 \sin(x_1y_1+x_2y_2)
 =
 \sin(x_1y_1)\cos(x_2y_2)
 +\cos(x_1y_1)\sin(x_2y_2)
\]
and absolute convergence of the Gaussian integrals give
\begin{equation}\label{eq:app-quadrant-decomposition}
 S_{\mathsf Q_2}=S_+\otimes C_++C_+\otimes S_+.
\end{equation}
For $h=1$,
\[
 S_{\mathsf Q_2}h(\yvec)
 =
 \psi(y_1)(C_+\mathbf 1)(y_2)
 +(C_+\mathbf 1)(y_1)\psi(y_2).
\]
Both summands are positive on the interior of $\mathsf Q_2$. Hence
\begin{align*}
 \int_{\mathsf Q_2}
 e^{-\norm{\yvec}^2/2}\abs{S_{\mathsf Q_2}h(\yvec)}\,\dd\yvec
 &=
 2\left(
   \int_0^\infty e^{-y^2/2}\psi(y)\,\dd y
 \right)
 \left(
   \int_0^\infty e^{-y^2/2}(C_+\mathbf 1)(y)\,\dd y
 \right)\\
 &=\frac{\pi^2}{4}\cK.
\end{align*}
The input $h=-1$ gives the same value by linearity and the outer
absolute value.

For the upper bound, we use the same tensor decomposition. Let
$h,k:\mathsf Q_2\to[-1,1]$ be measurable, and denote by
$\mathcal B_{SC}$ the bilinear form induced by the first summand:
\[
 \mathcal B_{SC}(h,k)
 :=
 \int_{\mathsf Q_2}e^{-\norm{\yvec}^2/2}
 ((S_+\otimes C_+)h)(\yvec)k(\yvec)\,\dd\yvec.
\]
Define $\mathcal B_{CS}$ similarly with $C_+\otimes S_+$ in place of
$S_+\otimes C_+$.
Let $\mathcal H:=L_2((0,\infty),\dd t)$. For $x_2,y_2>0$, define
vectors $\mathsf U(x_2),\mathsf V(y_2)\in\mathcal H$ by
\begin{align*}
 (\mathsf U(x_2))(t)
 &:=
 \frac{S_+(h(\cdot,x_2))(t)}
      {\sqrt{\varphi(t)}},\\
 (\mathsf V(y_2))(t)
 &:=
 e^{-t^2/2}\sqrt{\varphi(t)}\,k(t,y_2).
\end{align*}
By \eqref{eq:app-one-quadratic-bound} and
\eqref{eq:app-one-half-line-normalization}, respectively,
\[
 \norm{\mathsf U(x_2)}_{\mathcal H}^2
 \leq\frac{\pi}{2\sqrt2}\cK,
 \qquad
 \norm{\mathsf V(y_2)}_{\mathcal H}^2
 \leq\frac{\pi}{2\sqrt2}\cK.
\]
Thus the first-coordinate sine form has the fixed factorization
\[
 \mathcal B_{SC}(h,k)
 =
 \int_0^\infty\int_0^\infty
 e^{-(x_2^2+y_2^2)/2}\cos(x_2y_2)
 \ip{\mathsf U(x_2)}{\mathsf V(y_2)}_{\mathcal H}
 \,\dd x_2\,\dd y_2.
\]
The two vectors in the inner product depend separately on $h$ and
$k$. This is the tensorization point: the remaining coordinate is a
cosine form with inputs in the fixed Hilbert space $\mathcal H$.

Applying the Hilbert-valued form of
\cref{lem:app-quadrant-cosine} from
\cref{rem:app-quadrant-Hilbert-extension} and using homogeneity, we obtain
the following bound for the mixed tensor term by the product of the
one-dimensional sine and cosine constants:
\[
 \abs{\mathcal B_{SC}(h,k)}
 \leq
 \frac{\pi}{2\sqrt2}
 \operatorname*{ess\,sup}_{x_2>0}\norm{\mathsf U(x_2)}_{\mathcal H}
 \operatorname*{ess\,sup}_{y_2>0}\norm{\mathsf V(y_2)}_{\mathcal H}
 \leq
 \left(\frac{\pi}{2\sqrt2}\cK\right)
 \left(\frac{\pi}{2\sqrt2}\right)
 =\frac{\pi^2}{8}\cK.
\]
The pointwise Hilbert-space bounds above also justify the factorization
and Fubini's theorem by Cauchy--Schwarz and Gaussian integrability.

Interchanging the two coordinates gives the same bound for the
bilinear form $\mathcal B_{CS}$, with the two one-dimensional factors
in the opposite order. Therefore the triangle inequality gives
\[
 \abs{\mathcal B_{SC}(h,k)+\mathcal B_{CS}(h,k)}
 \leq
 \left(\frac{\pi}{2\sqrt2}\cK\right)
 \left(\frac{\pi}{2\sqrt2}\right)
 +
 \left(\frac{\pi}{2\sqrt2}\right)
 \left(\frac{\pi}{2\sqrt2}\cK\right)
 =\frac{\pi^2}{4}\cK.
\]
By \eqref{eq:app-quadrant-decomposition}, the expression on the left
is the bilinear pairing of $S_{\mathsf Q_2}h$ with $k$. Taking
$k=\sgn(S_{\mathsf Q_2}h)$ proves the theorem.
\end{proof}

\begin{remark}
The same iterated tensor argument gives bounds in higher-dimensional
quadrants; by contrast, on the full space $\R^2$ one has
$\cK<\KoDim{2}<1$ by BMMN~\cite{BMMN} and \cref{thm:main}.
\end{remark}

\newcommand{\etalchar}[1]{$^{#1}$}


\begin{thebibliography}{BMMN13}

\bibitem[AN06]{ALONNAOR}
Noga Alon and Assaf Naor.
\newblock Approximating the cut-norm via {Grothendieck}'s inequality.
\newblock {\em SIAM Journal on Computing}, 35(4):787--803, 2006.

\bibitem[BMMN13]{BMMN}
Mark Braverman, Konstantin Makarychev, Yury Makarychev, and Assaf Naor.
\newblock The {Grothendieck} constant is strictly smaller than {Krivine}'s
  bound.
\newblock {\em Forum of Mathematics, Pi}, 1:e4, 2013.

\bibitem[Dav85]{DAVIE}
A.~M. Davie.
\newblock Matrix norms related to {Grothendieck}'s inequality.
\newblock In {\em Banach Spaces (Columbia, Mo., 1984)}, volume~1166 of
  {\em Lecture Notes in Mathematics}, pages 22--26. Springer, Berlin, 1985.

\bibitem[Fol89]{FOLLAND}
Gerald~B. Folland.
\newblock {\em Harmonic Analysis in Phase Space}, volume 122 of {\em Annals of
  Mathematics Studies}.
\newblock Princeton University Press, Princeton, NJ, 1989.

\bibitem[Gro53]{GROTH}
Alexander Grothendieck.
\newblock R\'esum\'e de la th\'eorie m\'etrique des produits tensoriels
  topologiques.
\newblock {\em Boletim da Sociedade de Matem\'atica de S\~ao Paulo}, 8:1--79,
  1953.

\bibitem[Haa87]{HAAGERUP}
Uffe Haagerup.
\newblock A new upper bound for the complex {Grothendieck} constant.
\newblock {\em Israel Journal of Mathematics}, 60(2):199--224, 1987.

\bibitem[Hei26a]{HEILMANLOWER}
Steven Heilman.
\newblock A lower bound for {Grothendieck}'s constant, 2026.
\newblock Preprint, arXiv:2603.22616.

\bibitem[Hei26b]{HEILMAN}
Steven Heilman.
\newblock An upper bound on {Grothendieck}'s constant, 2026.
\newblock Preprint, arXiv:2606.00247.

\bibitem[JM26]{JM}
Chris Jones and Giulio Malavolta.
\newblock The {Grothendieck} constant is strictly larger than {Davie--Reeds}'
  bound, 2026.
\newblock Preprint, arXiv:2603.30039.

\bibitem[K{\"o}n01]{KONIG}
Hermann K{\"o}nig.
\newblock On an extremal problem originating in questions of unconditional
  convergence.
\newblock In {\em Recent Progress in Multivariate Approximation}, volume 137 of
  {\em International Series of Numerical Mathematics}, pages 185--192.
  Birkh{\"a}user, Basel, 2001.

\bibitem[Kri77]{KRIVINE}
Jean-Louis Krivine.
\newblock Sur la constante de {Grothendieck}.
\newblock {\em Comptes Rendus de l'Acad\'emie des Sciences de Paris, S\'erie
  A--B}, 284(8):A445--A446, 1977.

\bibitem[Kri79]{KRIVINE79}
Jean-Louis Krivine.
\newblock Constantes de {Grothendieck} et fonctions de type positif sur les
  sph\`eres.
\newblock {\em Advances in Mathematics}, 31(1):16--30, 1979.

\bibitem[Kri23]{KRIVINENOTE}
Jean-Louis Krivine.
\newblock A note about {Grothendieck}'s constant, 2023.
\newblock Preprint, arXiv:2306.00995.

\bibitem[KN12]{KHOTNAOR}
Subhash Khot and Assaf Naor.
\newblock {Grothendieck}-type inequalities in combinatorial optimization.
\newblock {\em Communications on Pure and Applied Mathematics},
  65(7):992--1035, 2012.

\bibitem[LSX{\etalchar{+}}26]{LISK}
Alan Li, Rahul Saha, Anton Xue, Swarat Chaudhuri, Adam Klivans, Pravesh~K.
  Kothari, and Raghu Meka.
\newblock The {Grothendieck} constant is less than
  {$\pi/(2\log(1+\sqrt{2}))-10^{-5}$}, 2026.
\newblock Preprint, arXiv:2606.03991v2.

\bibitem[NR14]{NR}
Assaf Naor and Oded Regev.
\newblock {Krivine} schemes are optimal.
\newblock {\em Proceedings of the American Mathematical Society},
  142:4315--4320, 2014.

\bibitem[NRV14]{NRV}
Assaf Naor, Oded Regev, and Thomas Vidick.
\newblock Efficient rounding for the noncommutative {Grothendieck}
  inequality.
\newblock {\em Theory of Computing}, 10(11):257--295, 2014.

\bibitem[Pis86]{PISIER}
Gilles Pisier.
\newblock {\em Factorization of Linear Operators and Geometry of Banach
  Spaces}.
\newblock Volume~60 of {\em CBMS Regional Conference Series in Mathematics}.
\newblock American Mathematical Society, Providence, RI, 1986.

\bibitem[Ree91]{REEDS}
J.~A. Reeds.
\newblock A new lower bound on the real {Grothendieck} constant.
\newblock Unpublished manuscript, 1991.

\bibitem[SLX{\etalchar{+}}26]{SLXCKKM}
Rahul Saha, Alan Li, Anton Xue, Swarat Chaudhuri, Adam Klivans, Pravesh~K.
  Kothari, and Raghu Meka.
\newblock New lower and upper bounds for the {Grothendieck} constant, 2026.
\newblock Preprint, arXiv:2608.11158v2.

\bibitem[Ver26]{VERSHYNIN}
Roman Vershynin.
\newblock {\em High-Dimensional Probability: An Introduction with
  Applications in Data Science}.
\newblock Second edition, Cambridge University Press, Cambridge, 2026.

\end{thebibliography}
\end{document}